\documentclass[10pt, article]{amsart}

\usepackage{tikz}
\usetikzlibrary{calc}

\usepackage{ae} 
\usepackage[T1]{fontenc}
\usepackage[cp1250]{inputenc}
\usepackage{amsmath}
\usepackage{amssymb, amsfonts,amscd,verbatim}
\usepackage{hyperref}

\usepackage{indentfirst}
\usepackage{latexsym}
\input xy
\xyoption{all}

\usepackage{amsmath}    

\theoremstyle{plain}
\newtheorem{proposition}{Proposition}[section]
\newtheorem{theorem}[proposition]{Theorem}
\newtheorem{corollary}[proposition]{Corollary}

\newtheorem{lemma}[proposition]{Lemma}

\theoremstyle{definition}
\newtheorem{definition}[proposition]{Definition}

\theoremstyle{remark}
\newtheorem{remark}[proposition]{Remark}

\newcommand{\diam}{\operatorname{Diam}\nolimits}

\newcommand{\Rips}{\operatorname{Rips}\nolimits}

\newcommand{\sRips}{\operatorname{sRips}\nolimits}

\def\a{\alpha}
\def\b{\beta}
\def\g{\gamma}
\def\d{\delta}
\def\di{\partial}
\def\e{\varepsilon}
\def\f{\varphi}
\def\l{\lambda}
\def\s{\sigma}

\def\r{\rho}
\def\o{\omega}

\def\cN{\overline{N}}
\def\la{\langle}
\def\ra{\rangle}

\def\ZZ{{\mathbb Z}}
\def\NN{{\mathbb N}}
\def\FF{{\mathbb F}}

\def\f{{\varphi}}

\errorcontextlines=0
\numberwithin{equation}{section}
%

\title[Persistent Homology with Selective Rips complexes...]
{Persistent Homology with Selective Rips complexes detects geodesic circles}


\author{\v Ziga ~Virk}
\address{University of Ljubljana, and institute IMFM, Ljubljana, Slovenia}
\email{ziga.virk@fri.uni-lj.si}
\thanks{Research was  supported by Slovenian Research Agency grants No. J1-4001  and P1-0292.}
\subjclass{55N31, 57R19, 55U05, 57N65}
\keywords{simple closed geodesic; Rips complex; persistent homology; local winding number}

\begin{document}

\maketitle
\begin{center}
\today
\end{center}

\begin{abstract}
This paper introduces a method to detect each geometrically significant loop that is a geodesic circle (an isometric embedding of $S^1$) and a bottleneck loop (meaning that each of its perturbations increases the length) in a geodesic space using persistent homology. Under fairly mild conditions we show that such a loop either terminates a $1$-dimensional homology class or gives rise to a $2$-dimensional homology class in persistent homology. The main tool in this detection technique are selective Rips complexes, new custom made complexes that function as an appropriate combinatorial lens for persistent homology in order to detect the above mentioned loops.
The main argument is based on a new concept of a local winding number, which turns out to be an invariant of certain homology classes.
\end{abstract}

\section{Introduction}

Homology as a classical invariant is well understood to measure holes in spaces. Its parameterized version persistent homology (PH) on the other hand is thought to also contain geometric information about the underlying space when arising from a filtration built upon it via Rips or \v Cech complexes.  Despite being a focus of intense study from theoretical and practical point of view in the past two decades, the precise nature of geometric information that PH encodes has mostly eluded our understanding. A groundbreaking result in this setting was the discovery of persistence of $S^1$ in \cite{AA}, which demonstrated that lower-dimensional geometric features may induce higher-dimensional algebraic elements in PH. In this spirit a question of geometric information encoded in PH can be recast in the following way: Which geometric properties does PH encode and how does it encode them? Conversely, how to interpret elements of PH in terms of geometric properties?

Some of the few settings in which such an interplay has been theoretically explained contain $1$-dimensional PH of metric graphs \cite{7A},  $1$-dimensional PH and persistent fundamental group of geodesic spaces \cite{ZV, ZV1}, the complete persistence of $S^1$ \cite{AA}; parts of PH of ellipses \cite{Ad5}, regular polygons \cite{Ad4}, and certain spheres \cite{ZVCounterex}.

\begin{figure}[htbp]
\begin{center}
\includegraphics{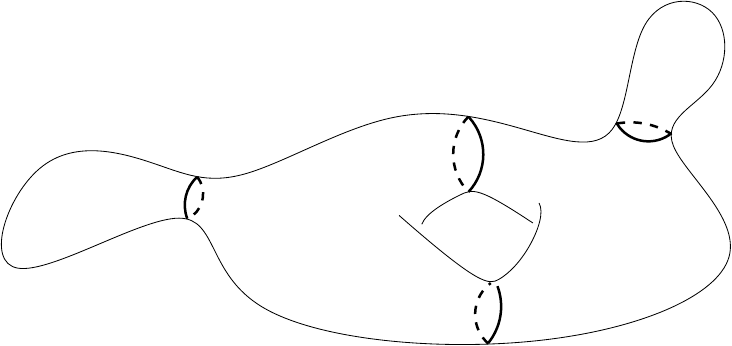}
\caption{A stretched topological torus of dimension $2$. Four of the $\mathcal{GB}$-loops are indicated by bold lines. There is another $\mathcal{GB}$-loop tracing the inner radius of the hole in the middle. All $\mathcal{GB}$-loops can be detected by selective Rips filtration using the results of this paper employing one- and two-dimensional persistent homology. On the other hand, some of these loops may not be detected using standard Rips complexes.}
\label{Fig4}
\end{center}
\end{figure}

In this paper we a \textbf{focus on detection of those loops in geodesic spaces}, that are geodesic circles (isometric embeddings of $S^1$ equipped with a geodesic metric) and bottleneck loops (i.e., each of its perturbations increases the length), see Figure \ref{Fig4} for an example. Let us call them $\mathcal{GB}$-loops. These form a particular class of geodesics in the sense of classical differential geometry hence detecting them we are detecting parts of the length spectrum \cite{H1, H2}. $\mathcal{GB}$-loops are obvious significant geometric features of a space.  All lexicographically minimal homology and fundamental group bases \cite{ZV} consist of $\mathcal{GB}$-loops (in the case of fundamental groups these have to be connected to a basepoint by a path). Each systole \cite{Katz} is a $\mathcal{GB}$-loop and corresponds to the first critical value of $1$-dimensional persistent fundamental group \cite{ZV}.    In \cite{ZV2} and \cite{ZV} it was shown how a $\mathcal{GB}$-loop may sometimes be detected using PH. If $\a$ is a geodesic circle of circumference $|\a|$ in a geodesic space $X$ then in the case of Rips filtration there are three ways of detecting it:
\begin{enumerate}
 \item \textbf{A topological footprint} appearing  if $\a$ is a member of a lexicographically shortest homology base: at $|\a|/3$ a $1$-dimensional PH element ceases to exist\cite{ZV}.
 \item \textbf{A combinatorial footprint} arising from the internal combinatorics of the Rips complex of a circle as described in \cite{AA}:  under certain local conditions a part of PH of $\a$ (i.e., odd-dimensional elements) may appear in PH of $X$ \cite{ZV2};
 \item \textbf{A geometric footprint} appearing under certain local geometric conditions at $\a$ in the absence of a topological footprint: a corresponding $2$-dimensional element of PH of $X$ appears at $|\a|/3$ \cite{ZV2}.
\end{enumerate}
These results provide ways to detect lengths of $\a$ but often turn out to be too restrictive in their conditions. If we cut off a geodesic sphere above the equator and consider the lower remaining part, its boundary (now a $\mathcal{GB}$-loop) can be detected in the above mentioned way only if the cut is made sufficiently far from the equator. If the cut is too low, the  mentioned methods can't detect $\a$, see the example in Subsection \ref{SubSectExample}.

The aim of this paper is to expand the method of a geometric footprint in order to detect more $\mathcal{GB}$-loops under simpler and more general conditions. The first step is to consider the choice of complexes (discretization) used in the construction of PH. Various constructions are currently being used, ranging from classical complexes (Rips, \v Cech), complexes aimed at computational convenience (alpha, wrap, witness) or constructions based on geometric intuition (metric thickening) \cite{Ad1, Ad2, Ad3}. In Definition \ref{DefSRips} we present \textbf{selective Rips complexes} as custom made complexes specifically designed to detect local features. They can be thought of as a combinatorial lens for persistent homology. The idea is that for small dimensions (up to dimension $2$ in our case) a selective Rips complex coincides with the usual Rips complex, while higher-dimensional simplices are forced to be ``thin'' in all but two directions. The reconstruction properties of a more general version of selective Rips complexes are treated in the follow-up paper \cite{Lem}.

As the \textbf{main result} (Theorem \ref{ThmMain1} and Corollaries \ref{Cor5} and \ref{Cor6}) we prove that the length of each $\mathcal{GB}$-loop $\a$ admitting an arbitrarily small geodesically convex neighborhood can be detected by persistent homology via appropriate selective Rips complexes. In particular, such $\a$ generates a $2$-dimensional or terminates a $1$-dimensional homology class at $|\a|/3$. This is a significant generalization  when compared to the corresponding result in  \cite{ZV2}, which required generous assumptions on the geometry and size of neighborhoods of $\a$. A quantitavtive analysis of Remark \ref{Rem:brilliant} also allows us to deduce specific conditions, under which PH via classical Rips complexes detect $\mathcal{GB}$-loops. As our main tool we develop local winding number at $\a$ (see Definition \ref{DefWNHomology}) and prove they are homology invariant in an appropriate setting. We conclude with a discussion on the localization of $\a$ and an example.

The structure of the paper is the following. In Section 2 we present preliminaries on the setting and explain local geometry of bottleneck loops. In Section 3 we introduce local winding numbers. In Section 4 we define selective Rips complexes  and prove that local winding numbers are homology invariants in their setting. In Section 5 we prove our main results, with the localization discussion and an example concluding the paper in Subsections 5.1 and 5.2 respectively.

\section{Preliminaries}
\label{SecPrelims}

In this section we recall the context of geodesic circles in geodesic spaces and provide some simple lemmas clarifying the context.

Given $x$ in a metric space $X$ (or $A \subset X$) and $r>0$, notation $N(x,r)$ (or $N(A,r)$ respectively) denotes the open $r$-neighborhood of $x$ (of $A$ respectively). Notation $\cN(x,r)$ (or $\cN(A,r)$ respectively) will denote the closed $r$-neighborhood. A path (or a loop) in $X$ is a continuous map from an interval (or $S^1$) to $X$. Its length is denoted by $|\a|$. Throughout the paper a path (or a loop) may be thought of as a map or as the image of the corresponding map. The use of double interpretation is often geometrically more convenient and simplifies descriptions, while it shouldn't effect the clarity of the presentation. A choice of orientation will be specifically mentioned when required for the sake of precision, for example when talking about the winding number. On other occasions we will not mention orientation. For example, when talking about $\a$ being the unique shortest loop in some set, it is apparent that reparameterizations of $\a$ and $\a^-$ are also shortest loops as maps, which encourages us to think of $\a$ as a subset in this setting to describe the geometry more elegantly.

A  metric space $X$ is \textbf{geodesic}, if for each $x,y\in X$ there exists a path between them of length $d(x,y)$, i.e., an isometric embedding $\gamma \colon [0, d(x,y)]\to X$ satisfying $\g(0)=x$ and $\g(d(x,y))=y$. Such a path will be called a \textbf{geodesic} (a terminology which may differ from some usages in differential geometry). A subset $A \subseteq X$ is geodesically convex, if $\forall x,y, \in A$, all the geodesics (in $X$) from $x$ to $y$ lie in $A$. The concatenation of paths $\a, \b$ in $X$ is defined if the endpoint of $\a$ coincides with the initial point of $\b$ and is denoted by $\a * \b$. The converse path of $\a\colon [a,b]\to X$ will be denoted by $\a^-\colon [a,b]\to X$, i.e., $\a^-(t)=\a(a+b-t)$. As each loop is also a path, the terminology can also be used for loops. For example, if $\a$ is a loop, $\a*\a$ is well defined up to homotopy by choosing any point of $\a$ as the initial point of the loop $\a$. For $k\in \ZZ$  
the $k$-fold concatenation of an oriented loop $\a$ is defined as:
\begin{itemize}
\item the constant loop if $k=0$,
\item the $k$-fold concatenation of $\a$, (i.e.,  $\a*\a*\ldots*\a$ with $k$ terms) if $k>0$,
\item the $|k|$-fold concatenation of $\a^-$, if $k<0$.
\end{itemize}
 Given a triple of points $x_1, x_2,x_3\in X$, its \textbf{filling} is a loop in $X$ obtained by concatenating (potentially non-unique) geodesics from $x_1$ to $x_2$, from $x_2$ to $x_3$ and from $x_3$ to $x_1$. Note the the length of a filling is $d(x_1,x_2)+ d(x_2, x_3)+ d(x_3, x_1)$.

For $c>0$ let $S^1_c$ be a circle equipped with the geodesic metric that results in the circumference $c$.  A \textbf{geodesic circle} in $X$ is an isometrically embedded $S^1_c$ for some $c>0$. A loop $\a$ of finite length in $X$ is a \textbf{bottleneck loop}, if it is the shortest representative of the (free unoriented) homotopy class of $\a$ (considered as a map) in some neighborhood of $\a$ (considered as a subset of $X$). For example, the equator on the standard unit sphere is a geodesic circle, which is not a bottleneck loop. The loop on $x^2+y^2-z^2=1$ determined by $z=0$ is a bottleneck loop. It is not difficult to find bottleneck loops which are not geodesic circles.

Space $X$ is \textbf{semi-locally simply connected}  if  $\forall x\in X, \forall \e > 0, \exists \d>0$ so that each loop in $N(x,\d)$ is contractible in $N(x, \e)$. Manifolds and simplicial complexes are semi-locally simply connected.

\begin{lemma}
 \label{LemmaA}
 Assume $X$ is a geodesic semi-locally simply connected space. For each $\a \colon S^1 \to X$ there exists $\r>0$ such that if $\b\colon S^1 \to X$ satisfies  $d(\a(t),\b(t))< \r, \forall t\in S^1$, then $\a \simeq \b$.
\end{lemma}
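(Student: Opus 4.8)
The plan is to use a standard compactness-plus-partition-of-unity argument, exploiting that $X$ is geodesic (so small balls are path-connected in a controlled way) and semi-locally simply connected (so small loops bound).

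First I would use compactness of $S^1$: since $\a$ is continuous and $S^1$ compact, $\a(S^1)$ is compact. For each point $x = \a(t)$ apply semi-local simple connectivity to get $\d_x>0$ such that every loop in $N(x,\d_x)$ is contractible in $N(x,\e_x)$, where I will additionally shrink $\e_x$ later; by uniform continuity of $\a$ and a Lebesgue-number argument, I can find a single $\r_0>0$ and a finite partition $0=t_0<t_1<\dots<t_n=2\pi$ of $S^1$ such that each arc $\a([t_{i-1},t_i])$ together with its $\r_0$-neighborhood lies inside one of the good balls $N(x_i,\d_{x_i})$, and moreover $\a([t_{i-1},t_i])$ has diameter small enough that a suitable ball of radius, say, $\e_{x_i}$ contains both this arc and the neighboring ones in a way that makes the local contraction available. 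Set $\r = \r_0/3$ or similar.

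Next, given $\b$ with $d(\a(t),\b(t))<\r$ for all $t$, I would build the homotopy piecewise. On each vertex $t_i$, choose a geodesic $\s_i$ in $X$ from $\a(t_i)$ to $\b(t_i)$; since $d(\a(t_i),\b(t_i))<\r$ and $X$ is geodesic, $\s_i$ stays within the ball $\cN(\a(t_i),\r)$. Now on each arc $[t_{i-1},t_i]$ consider the loop $\ell_i$ obtained by concatenating $\a|_{[t_{i-1},t_i]}$, then $\s_i$, then $\b|_{[t_{i-1},t_i]}$ reversed, then $\s_{i-1}$ reversed. By the diameter control, $\ell_i$ lies entirely inside one of the good balls $N(x_i,\d_{x_i})$, hence is contractible in the slightly larger ball. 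Choosing the partition fine enough that all these larger balls are still ``small'' (contained in sets where loops contract), the nullhomotopies of the $\ell_i$ glue along the arcs $\s_i$ to give a homotopy between $\a$ and $\b$ (after the trivial reparameterization absorbing the $\s_i$ at the endpoints, which cancel in pairs since the concatenation $\a \simeq \ell_1 * \ell_2 * \cdots * \ell_n$ up to the $\s_i$'s, which telescope). Concretely, one shows $\a \simeq \s_0 * \b * \s_n^{-}$ and, since $\s_0 = \s_n$ as $t_0$ and $t_n$ are the same point of $S^1$, this gives $\a \simeq \b$.

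The main obstacle is the bookkeeping in the gluing step: making sure the finitely many local nullhomotopies can be chosen with compatible behavior on the overlapping pieces (the geodesic segments $\s_i$), and that the ``contractible in $N(x,\e)$'' balls can be arranged small enough simultaneously for all $i$ while still containing the relevant loops $\ell_i$. This is where one must be careful to choose $\e_{x_i}$ first (small), then extract $\d_{x_i}$, then take $\r$ smaller than a Lebesgue number adapted to the cover by the $\d$-balls — the order of quantifiers matters. The geodesic hypothesis is what keeps the connecting segments $\s_i$ short and inside the right balls; without it one could not control where a path joining $\a(t_i)$ to $\b(t_i)$ goes. Everything else is routine once the cover and the constant $\r$ are fixed.
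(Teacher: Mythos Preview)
Your approach is essentially the same as the paper's: partition $S^1$ finely, join $\a(t_i)$ to $\b(t_i)$ by geodesics, and observe that the resulting small ``square'' loops are contractible, whence the homotopy. The paper's version is slightly leaner in two respects. First, it only uses that small loops are contractible \emph{in $X$}, not in a prescribed small ball; so the whole two-layer $\e_{x_i}/\d_{x_i}$ bookkeeping you set up is unnecessary here --- a single uniform $\r$ (from compactness of $\a(S^1)$) with the property ``every loop in $\cN(x,\r)$ is contractible in $X$'' suffices. Second, the paper chooses the partition \emph{after} $\b$ is given, using $\mu=\max_t d(\a(t),\b(t))<\r$ and asking that each $\a$- and $\b$-arc have diameter at most $\r-\mu$; this avoids fixing a partition in advance and makes the diameter estimate for the square loops ($\leq \r$) immediate. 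Neither point is a gap in your argument, just avoidable overhead.
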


\begin{proof}
 Choose $\r>0$ such that for each $x\in \a(S^1)$ each loop in $\cN(x,\r)$ is contractible in $X$. Define $\mu = \max_{t\in S^1} \{d(\a(t),\b(t))\} < \r$. Partition $S^1$ into small intervals so that the image of each of such an interval is of diameter at most $\r-\mu$ when mapped by $\a$ or $\b$. Also, connect the corresponding endpoints of such intervals by geodesics, these are of length at most $\mu$. Such partition decomposes the ``difference'' between $\a$ and $\b$ into loops of diameter at most $ \r$, see Figure \ref{Fig1} for a sketch. Such loops are contractible by our assumption, hence $\a \simeq \b$.
 
 \begin{figure}[htbp]
\begin{center}
\includegraphics{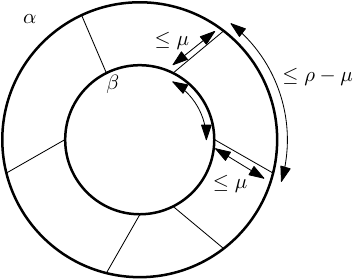}
\caption{A sketch of a decomposition of Lemma \ref{LemmaA}. The images of radial arcs are of length at most $\mu$ while the images of angular are of diameter at most $\r-\mu$.}
\label{Fig1}
\end{center}
\end{figure}
\end{proof}

\begin{lemma}
 \label{LemmaB}
Assume $X$ is a geodesic semi-locally simply connected space. For each geodesic circle $\a \colon S^1_{|\a|} \to X$ there exists $\r>0$ such that if $\beta\colon S^1 \to N(\a,\r)$, then $\beta$ is homotopic to a $k$-fold concatenation of $\a$ for some $k\in \ZZ$.
\end{lemma}

\begin{proof}
Choose $\r>0$ such that $4 \r < |\a|$ and for each $x\in \a(S^1)$ each loop in $\cN(x,3 \r)$ is contractible in $X$. Define $\mu = \max_{t\in S^1} \{d(\b(t), \a(S^1))\}$ and note that $\mu < \r$. Let $\Pi$ be a partition of  $S^1$ into small intervals so that the image of each interval is of diameter at most $\r-\mu$ when mapped by $\b$. For each point $t\in S^1$ appearing as an endpoint of two intervals of $\Pi$ choose a point $s_t\in S^1_{|\a|}$ such that $d(\beta(t), \alpha(s_t)) \leq \mu$. For each interval of $\Pi$ with endpoints $t, t'$:
\begin{enumerate}
\item note that $d(\alpha(s_t),\alpha(s_{t'}))< \rho + \mu < |\a|/2$;
\item connect $\alpha(s_t)$ to $\alpha(s_{t'})$ by the geodesic $\g_{t, t'}$ along $\a$, which exists as $\a$ is a geodesic circle;
\item connect $s_t$ and  $s_{t'}$ by the geodesic in $S^1_{|\a|}$, whose image is the geodesic $\g_{t, t'}$.
\end{enumerate}
This induces a map $\f \colon S^1 \to S^1_{|\a|}$, mapping $S^1$ as the domain of $\b$ along with partition $\Pi$ to $S^1_{|\a|}$ as the domain of $\a$, so that each interval of $\Pi$ is mapped bijectively onto a geodesic in $S^1_{|\a|}$ between $\f(t)=s_t$ and $ \f(t')=s_{t'}$, which is of length at most $\rho + \mu< |\a|/2$.  Observe that $\b \simeq \a \circ \f$ as, similarly as in Lemma \ref{LemmaA}, the ``difference'' between them can be decomposed into loops of diameter at most $3 \r$ using partition $\Pi$ as shown on Figure \ref{Fig2}. This concludes the proof as $\a \circ \f$ is homotopic to a $k$-fold concatenation of $\a$ for some $k\in \ZZ$.
  \begin{figure}[htbp]
\begin{center}
\includegraphics{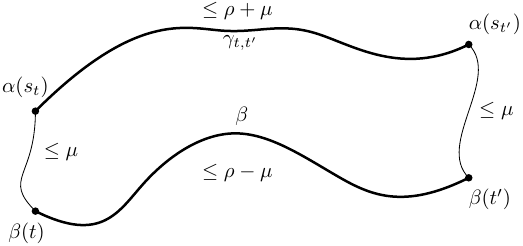}
\caption{An excerpt of homotopy between $\b$ and $\f \circ \a$ of Lemma \ref{LemmaB}. The inequalities indicate the bound on the diameter of each part. Taking into the account $\mu < \rho$,  the entire loop is of diameter at most $3 \rho$, hence it is contractible. The corresponding nullhomotopy provides a homotopy between a part of $\beta$ on an interval of $\Pi$ (below) and the corresponding part $\g_{t,t'}$ of $\f \circ \a$ (above).}
\label{Fig2}
\end{center}
\end{figure}
\end{proof}

\begin{lemma}
 \label{LemmaD}
Assume $X$ is a geodesic semi-locally simply connected space. For each geodesic circle $\a \colon S^1_{|\a|} \to X$ that is also a bottleneck loop,  there exists $\r>0$ such that $\a$ is the shortest non-contractible loop in $N(\a, \r)$.
\end{lemma}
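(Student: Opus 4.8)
The plan is to combine Lemma \ref{LemmaB} and Lemma \ref{LemmaD}'s hypotheses (that $\a$ is a geodesic circle and a bottleneck loop) by choosing a single radius $\r$ small enough for both of the following to hold: first, that every loop in $N(\a,\r)$ is homotopic (freely) to some $k$-fold concatenation $\a^k$ of $\a$, which is exactly the content of Lemma \ref{LemmaB}; and second, that $\r$ witnesses the bottleneck property, i.e., $\a$ is the shortest loop in its own free homotopy class inside $N(\a,\r)$. Shrinking $\r$ further if necessary, we may also assume $4\r<|\a|$. I would then argue that with this choice, $\a$ is the shortest non-contractible loop in $N(\a,\r)$.

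The key step is the length comparison. Suppose $\b\colon S^1\to N(\a,\r)$ is a non-contractible loop; I want $|\b|\ge|\a|$. By Lemma \ref{LemmaB}, $\b$ is homotopic in $X$ to $\a^k$ for some $k\in\ZZ$; since $\b$ is non-contractible, $k\ne 0$. If $|k|=1$ then $\b$ is freely homotopic to $\a$ or $\a^-$, and the bottleneck property (with radius $\r$) gives $|\b|\ge|\a|$ directly. For $|k|\ge 2$ the idea is that a loop wrapping the geodesic circle $\a$ at least twice must be at least twice as long as $\a$, in particular longer than $|\a|$. To make this precise I would use that $\a$ is an isometric embedding of $S^1_{|\a|}$: a loop $\b$ homotopic to $\a^k$ with $|k|\ge 2$, together with the fact that $\b$ lies in the thin neighborhood $N(\a,\r)$ with $4\r<|\a|$, can be pushed onto $\a(S^1_{|\a|})$ via the map $\f$ constructed in Lemma \ref{LemmaB} without increasing length by more than a controlled amount — but a cleaner route is: the nearest-point projection argument from Lemma \ref{LemmaB} shows $\b\simeq\a\circ\f$ where $\f\colon S^1\to S^1_{|\a|}$ has degree $k$, and since $\a$ is an isometric embedding and $\f$ is built by concatenating geodesics in $S^1_{|\a|}$, the length of $\a\circ\f$ is $|k|\cdot|\a|$ if $\f$ is monotone on each piece in a consistent direction. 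One then needs $|\b|\ge$ something comparable; the safest formulation is to observe that $\b$ must surject onto $\a(S^1_{|\a|})$ (a loop of nonzero winding number around the circle covers it) and in fact traverses it $|k|$ times, so a standard covering-space / length-of-lift argument gives $|\b|\ge |k|\,(|\a|-2\r)\ge 2(|\a|-2\r)$, which exceeds $|\a|$ precisely because $4\r<|\a|$.

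Therefore in all cases $|\b|\ge|\a|$, and since $\a$ itself is a non-contractible loop in $N(\a,\r)$ (non-contractible because it is non-contractible in $X$ by the choice of $\r$ in Lemma \ref{LemmaD}'s style, or simply because $\a^1$ is non-contractible — a geodesic circle bounding nothing locally), $\a$ realizes the minimum. To finish, I would also note uniqueness is not claimed here, only that $\a$ is \emph{a} shortest non-contractible loop, so no further argument is needed.

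The main obstacle I anticipate is the $|k|\ge 2$ case: bounding $|\b|$ from below when $\b$ wraps $\a$ multiple times. The bottleneck hypothesis only directly controls competitors in the class of $\a$ itself (degree $\pm1$), so for higher multiples one genuinely needs the geometric input that $\a$ is an isometrically embedded circle and that $\r$ is small relative to $|\a|$; the cleanest tool is a length-nonincreasing nearest-point retraction of $N(\a,\r)$ onto $\a(S^1_{|\a|})$ (which exists for $\r$ small in a geodesic space near an isometrically embedded circle, or can be simulated by the partition/projection $\f$ of Lemma \ref{LemmaB} together with the observation that each geodesic arc $\g_{t,t'}$ along $\a$ is no longer than the corresponding arc of $\b$ up to an additive $2\mu<2\r$ error), after which a covering-space length estimate gives $|\b|\ge |k|(|\a|-2\r)>|\a|$. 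Making this retraction-or-projection step rigorous while keeping the additive errors under control is the technical heart of the argument.
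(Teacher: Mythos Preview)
Your setup---choose $\r$ small enough for both Lemma~\ref{LemmaB} and the bottleneck property, then classify any competitor $\b$ by its winding number $k$---matches the paper's, and the $|k|=1$ case is exactly right. The gap is the $|k|\ge 2$ case, which you yourself flag: neither a length-nonincreasing nearest-point retraction onto $\a$ nor a clean covering-space distance estimate is available in a general geodesic space, and the projection $\f$ from Lemma~\ref{LemmaB} is built from a partition whose number of pieces grows with $|\b|$, so the additive $2\mu$ errors per piece can swamp the estimate you want. The claimed inequality $|\b|\ge |k|(|\a|-2\r)$ is plausible but not established by the tools you cite; in particular, $\b$ does not literally ``surject onto $\a(S^1_{|\a|})$'' since it only lies in the tube $N(\a,\r)$, and $\f$ need not be monotone.

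The paper sidesteps this entirely by arguing the contrapositive: assume $\b$ is a loop in $N(\a,\r)$ with $|\b|<|\a|$, and show $\b$ must be contractible. The point is that the bound $|\b|<|\a|$ lets you fix a partition size $n\ge 5$ \emph{a priori}, independent of $\b$: cut $\b$ into $n$ arcs each of length $<|\a|/n$, project the endpoints to nearby points on $\a$, and join consecutive projected points by geodesics along $\a$ (each of length $<|\a|/n+2\r<|\a|/2$). The resulting loop on $\a$ is homotopic to $\b$ by the Lemma~\ref{LemmaB} mechanism and has total length $<n(|\a|/n+2\r)=|\a|+2n\r<2|\a|$ (having also arranged $2n\r<|\a|$), hence winding number in $\{-1,0,1\}$. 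Winding $\pm 1$ contradicts the bottleneck property since $|\b|<|\a|$, so $\b$ is contractible. This collapses your case split: you never need a direct length-versus-winding lower bound, only the easy upper bound on the length of the projected loop.
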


\begin{proof}
 Choose $\rho>0$ small enough and $n\in \{5, 6, \ldots\}$  large enough so that  for each $x\in \a(S^1)$ each loop in $\cN \big(x,\frac{|\a|}{n} + 2 \r \big)$ is contractible in $X$.
Further decrease $\r$ so that the following are satisfied:
\begin{itemize}
 	 \item $\rho$ satisfies Lemma \ref{LemmaB};
	 \item $2n \r < |\a|$;
	 \item $|\a|/n + 2 \r < |\a|/2$;
	 \item $\a$ is the shortest representative of its homotopy class 	 in $N(\a, \r)$.
\end{itemize}
 Assume there exists a shorter loop $\b$ in $N(\a, \r)$. Partition $S^1$ (as the domain of $\b$) into $n$ intervals so that the image of each of the intervals is of length less than $|\a|/n$ when mapped by $\b$. 
 For each point $t\in S^1$ appearing as an endpoint of two of the mentioned intervals above  choose a point $s_t\in S^1_{|\a|}$ such that $d(\b(t), \a(s_t)) \leq \mu$. As in Lemma  \ref{LemmaB}, for each of the mentioned intervals with endpoints $t, t'$ we connect  $\alpha(s_t)$ to $\alpha(s_{t'})$ by the unique geodesic $\g_{t, t'}$ of length less than $|\a|/n + 2 \r < |\a|/2$ along $\a$. Concatenating segments of the form $\g_{t, t'}$ we thus obtain a loop along $\a$ of length less than $n(|\a|/n + 2 \r)=|\a| + 2 n \r < 2|\a|$, which is homotopic to $\b$ by the same argument as provided in Lemma \ref{LemmaB}. As it is of length less than $2\a$, this loop is either  homotopic to $\a$ up to orientation or contractible. The first of these conclusions contradicts the assumption that $\a$ is a bottleneck loop (as $\b$ would in this case be shorter that $\a$ and homotopic to it). Hence $\b$  is contractible and the lemma is proved.
\end{proof}

The following is a standard result that can be proved directly or using the Arzel\` a-Ascoli Theorem. We state it here for completeness. 

\begin{lemma}
 \label{LemmaC}
Assume $X$ is a compact geodesic space and $f_i\colon I \to X$ is a sequence of geodesics in $X$. Then there exists a subsequence of $f_i$ converging point-wise to a (potentially one-point) geodesic in $X$.
\end{lemma}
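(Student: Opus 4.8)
The plan is to prove Lemma \ref{LemmaC} directly, extracting the convergent subsequence in two stages: first pin down the (limiting) lengths and endpoints, then run a diagonal argument on a countable dense subset of $I$ and promote the resulting pointwise limit on that set to a genuine geodesic on all of $I$. Throughout I will write each geodesic $f_i\colon I\to X$ with $I=[0,1]$; since $f_i$ is an isometric embedding of an interval of length $L_i:=\operatorname{Diam}(f_i(I))\le \operatorname{Diam}(X)<\infty$ reparameterized on $[0,1]$, we have $d(f_i(s),f_i(t))=L_i\,|s-t|$ for all $s,t\in I$. (If one instead insists that a geodesic be parameterized by arc length on $[0,L_i]$, the same proof works after first passing to a subsequence along which $L_i\to L\ge 0$ and reparameterizing; the $L=0$ case is exactly why the statement allows a one-point geodesic.)

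First I would pass to a subsequence (not relabeled) so that $L_i\to L$ for some $L\in[0,\operatorname{Diam}(X)]$; this uses compactness of $[0,\operatorname{Diam}(X)]\subset\RR$. Next, fix a countable dense set $Q\subset I$ — say $Q=\QQ\cap I$ — enumerate it as $q_1,q_2,\ldots$, and apply the standard diagonal argument: since $X$ is compact, $(f_i(q_1))_i$ has a convergent subsequence; within it $(f_i(q_2))_i$ has a convergent subsequence; and so on; the diagonal subsequence $g_j:=f_{i_j}$ then satisfies $g_j(q)\to f(q)$ for every $q\in Q$, defining a map $f\colon Q\to X$. The key estimate is that $f$ is $L$-Lipschitz on $Q$: for $q,q'\in Q$,
\[
d(f(q),f(q'))=\lim_{j\to\infty} d(g_j(q),g_j(q'))=\lim_{j\to\infty} L_{i_j}\,|q-q'|=L\,|q-q'|.
\]
Being Lipschitz on the dense set $Q$, $f$ extends uniquely to an $L$-Lipschitz map $f\colon I\to X$ with $d(f(t),f(t'))=L|t-t'|$ for all $t,t'$ (the identity $d(f(t),f(t'))=L|t-t'|$ passes to the closure by continuity of $d$ and of $f$). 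This $f$ is an isometric embedding of $[0,1]$ carrying the metric scaled by $L$, i.e., exactly a geodesic in the sense of the paper (a one-point geodesic when $L=0$).

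It remains to upgrade pointwise convergence on $Q$ to pointwise convergence on all of $I$, which I would do by an $\e/3$ argument using equi-Lipschitzness. Fix $t\in I$ and $\e>0$. Choose $q\in Q$ with $|t-q|<\e/(3(L+1))$ and additionally small enough that $L_{i_j}|t-q|<\e/3$ for all large $j$ (possible since $L_{i_j}$ is bounded). Then for large $j$,
\[
d(g_j(t),f(t))\le d(g_j(t),g_j(q))+d(g_j(q),f(q))+d(f(q),f(t))\le L_{i_j}|t-q|+d(g_j(q),f(q))+L|t-q|,
\]
and each of the three terms is $<\e/3$ for $j$ large (the middle term because $q\in Q$). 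Hence $g_j(t)\to f(t)$ for every $t\in I$, and $(g_j)$ is the desired subsequence converging pointwise to the geodesic $f$. (In fact the same estimate, taken uniformly in $t$, shows the convergence is uniform; but pointwise is all that is claimed.) I do not expect any serious obstacle here — this is the classical Arzelà–Ascoli packaging — so the only point requiring care is the bookkeeping when $L=0$, where the limit degenerates to a constant map, which is precisely why the statement is phrased with a ``potentially one-point'' geodesic.
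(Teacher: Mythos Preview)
Your proof is correct and is precisely the standard Arzel\`a--Ascoli/diagonal-subsequence argument. The paper in fact gives no proof of this lemma at all --- it merely remarks that the result ``can be proved directly or using the Arzel\`a--Ascoli Theorem'' and states it for completeness --- so your write-up is entirely consistent with (indeed, more detailed than) the paper's treatment.
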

\section{Geometric Lemmas}

In this section we present a sequence of geometric lemmas in our setting. These results are the fundamental reason why the selective Rips complexes as defined in the next section detect locally shortest geodesic circles through persistent homology.

\textbf{Overall assumptions and declarations for this section}: 
\begin{enumerate}
	 \item assume $X$ is a geodesic, locally compact,  semi-locally simply connected space;
	 \item assume loop $\a$ in $X$ is a geodesic circle and a bottleneck loop;
	 \item assume loop $\a$ has an arbitrarily small geodesically convex closed neighborhood;
	 \item By the previous assumptions and Lemma \ref{LemmaD} there exists a compact geodesically convex neighborhood $N$ of $\a$ in which $\a$ is the shortest non-contractible loop;
	 \item Furthermore, by Lemma \ref{LemmaB} we may assume that each loop in $N$ is homotopic to the $k$-fold concatenation of $\a$ for some $k\in \ZZ$;
	 \item Choose $T>0$ such that $N \supset \cN(\a, T)$ and $d(\cN(\a, T), N^C)>0$.
\end{enumerate}

\begin{definition}
\label{DefCicrum}
 A triple of points $x_1, x_2, x_3\in N$ is \textbf{circumventing} if any of its fillings in $N$ is homotopic to $\a$ (or possibly $\a^-$, if an orientation is taken into account).
\end{definition}

\begin{remark}
\label{Rem1}
 Note that the diameter of a circumventing triple in $N$ is at least $|\a|/3$. If the diameter of the triple is precisely $|\a|/3$ the points lie on $\a$. If the diameter of the triple is less than $|\a|/2$, then the ``difference'' between various fillings of the triple is generated by loops of length less than $|\a|$ (the argument is similar to the ones in the lemmas of Section \ref{SecPrelims} or in \cite[Proposition 4.6]{ZV}). As such loops in $N$ are contractible, the homotopy type of a filling is well defined in this case. 
\end{remark}

\begin{lemma}
\label{Lemma1}
For each $t>0$ there exists $\mu_t> |\a|/3$ such that each circumventing triple in $N$ of diameter less than $\mu_t$ has a filling that is contained in $\cN(\a, t).$ 
\end{lemma}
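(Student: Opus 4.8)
The plan is to argue by contradiction via a compactness argument and then use the bottleneck property of $\a$ to force rigidity in the limit. Suppose the statement fails for some fixed $t>0$. Then for every $\mu>|\a|/3$ there is a circumventing triple in $N$ of diameter less than $\mu$ none of whose fillings is contained in $\cN(\a,t)$, so taking $\mu=|\a|/3+1/n$ we obtain for each $n\in\NN$ a circumventing triple $T_n=(x_1^n,x_2^n,x_3^n)$ of diameter less than $|\a|/3+1/n$, a filling $\b_n=g_{12}^n*g_{23}^n*g_{31}^n$ of it (the $g_{ij}^n$ being geodesics between the $x_i^n$), and a point $p_n$ on $\b_n$ with $d(p_n,\a(S^1))>t$. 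Since $N$ is compact, pass to a subsequence with $x_i^n\to x_i\in N$; the lengths $d(x_i^n,x_j^n)$ then converge, the $g_{ij}^n$ rescaled to the domain $[0,1]$ are equi-Lipschitz with values in the compact $N$, so by the Arzel\`a--Ascoli theorem (cf.\ Lemma \ref{LemmaC}) a further subsequence makes each $g_{ij}^n$ converge uniformly to a constant-speed geodesic $g_{ij}$ from $x_i$ to $x_j$, and we may also arrange $p_n\to p$. Then $\b:=g_{12}*g_{23}*g_{31}$ is a filling of $T:=(x_1,x_2,x_3)$, uniform convergence $\b_n\to\b$ gives $p\in\b$ and $d(p,\a(S^1))\ge t$, and $\diam T\le|\a|/3$ forces $|\b|=d(x_1,x_2)+d(x_2,x_3)+d(x_3,x_1)\le|\a|$.

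The first real step is to transfer the topology to the limit, namely to show that $\b$ is non-contractible \emph{in $N$}. This is the argument of Lemma \ref{LemmaA} performed inside $N$. Choose a partition of $S^1$ fine enough that $\b$ has diameter $<|\a|/10$ on each piece; for $n$ large $\sup_s d(\b_n(s),\b(s))<|\a|/10$, so $\b_n$ has diameter $<3|\a|/10$ on each piece, and joining the $\b$- and $\b_n$-images of the partition points by geodesics — which lie in $N$ by geodesic convexity of $N$ — decomposes the ``difference'' between $\b_n$ and $\b$ into loops in $N$ of length at most $6|\a|/10<|\a|$. Each such loop is contractible in $N$ because $\a$ is the shortest non-contractible loop in $N$, hence $\b\simeq\b_n$ in $N$; as $T_n$ is circumventing, $\b_n$ is non-contractible, and therefore so is $\b$.

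Now rigidity finishes the proof. Since $\b$ is non-contractible, minimality of $\a$ gives $|\b|\ge|\a|$, so $|\b|=|\a|$ and, each of the three summands being at most $|\a|/3$, each equals $|\a|/3$; in particular $\diam T=|\a|/3$. Thus $\b$ is a non-contractible loop in $N$ of the minimal length $|\a|$, so by uniqueness of the shortest non-contractible loop in $N$ (cf.\ Remark \ref{Rem1}) $\b$ coincides, as a subset, with $\a(S^1)$. But then $p\in\b\subseteq\a(S^1)$, contradicting $d(p,\a(S^1))\ge t>0$. Hence no such $t$ exists and the lemma follows.

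The step I expect to demand the most care is the passage to the limit in the first two paragraphs: verifying that the three geodesic pieces of $\b_n$ converge to honest geodesics assembling into a filling of the limit triple $T$, that the ``escape point'' $p_n$ persists under the limit, and — most importantly — that non-contractibility is preserved under the uniform limit \emph{within $N$}, not merely within $X$. This last point is where geodesic convexity of $N$ and the minimality of $\a$ in $N$ both enter. Once it is established, Remark \ref{Rem1} and the defining properties of $\a$ make the concluding rigidity step immediate.
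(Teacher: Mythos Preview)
Your argument is correct and follows essentially the same route as the paper's own proof: assume the contrary, extract a convergent subsequence of triples and fillings via compactness (Lemma~\ref{LemmaC}), transfer non-contractibility to the limit using the Lemma~\ref{LemmaA} mechanism, and then use minimality of $\a$ to force the limit filling to coincide with $\a$, contradicting the escaping point. The only quibble is that your insistence on working ``in $N$'' for the homotopy step is stronger than needed (and not obviously available): the small loops of length $<|\a|$ are contractible in $X$ by assumption~(4), which already yields $\b\simeq\b_n$ in $X$ and hence non-contractibility of $\b$ in $X$ --- exactly what the minimality step requires.
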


\begin{proof}
Assume there exists $t>0$ such that for each $\mu>|\a|/3$ there exists a circumventing triple $x_\mu, y_\mu, z_\mu$ of diameter at most $\mu$, whose filling is not a subset of $\cN(\a, t)$. Choose a decreasing sequence $a_i$ converging to $|\a|/3$ so that:
\begin{itemize}
 \item the sequences $x_{a_i}, y_{a_i}, z_{a_i}$ converge in $N$;
 \item the sequence of fillings converges to a filling in $N$  (use Lemma \ref{LemmaC}).
\end{itemize}
By Lemma \ref{LemmaA} the limiting filling is homotopic to $\a$ and of length $|\a|$, hence it coincides with (a reparameterization of) $\a$. Consequently, almost all fillings of triples $x_{a_i}, y_{a_i}, z_{a_i}$ must be contained in $\cN(\a, t)$, a contradiction.
\end{proof}

\begin{definition}
 \label{DefWind}
 Choose an orientation on $\a$. The \textbf{winding number} $\o(\b)$ of an oriented loop $\b$ in $N$ equals $k\in \ZZ$ if $\b$ is homotopic to the $k$-fold concatenation of $\a$.
 Assume the diameter of an oriented triple (we can think of it as an oriented $2$-simplex) $\la x_1, x_2, x_3 \ra$ in  $ N$ is less than $|\a|/2$. The \textbf{winding number} $\o(x_1, x_2, x_3)$ equals $k\in \ZZ$ if a filling of $x_1, x_2, x_3$ is homotopic to the $k$-fold concatenation of $\a$.
\end{definition}

By Remark \ref{Rem1} the winding number of a triple is well defined.

\begin{lemma}
 \label{LemmaDa}
 There exists $D=D_\a >  |\a|$ such that each non-contractible loop in $N$  of length less than $D$ is homotopic to $\a$ (or possibly $\a^-$, if an orientation is taken into account).
 \end{lemma}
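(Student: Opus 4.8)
The plan is to argue by contradiction using a compactness argument in the spirit of Lemma \ref{Lemma1}. Suppose no such $D$ exists. Then for every $n\in\NN$ there is a non-contractible loop $\b_n$ in $N$ with $|\b_n| < |\a| + 1/n$ that is not homotopic to $\a$ or $\a^-$. By declaration (5) of the overall assumptions, each $\b_n$ is homotopic to a $k_n$-fold concatenation of $\a$ for some $k_n\in\ZZ$; since $\b_n$ is non-contractible, $k_n\neq 0$, and since $\b_n$ is not homotopic to $\a$ or $\a^-$, we have $|k_n|\geq 2$. The first step is to observe that this is already geometrically implausible for long loops: a loop homotopic to $\a^{\pm 2}$ should morally have length at least close to $2|\a|$, but I must derive this rather than assume it, since $N$ need not be nonpositively curved.

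The key step is the following. Reparameterize each $\b_n$ proportionally to arc length on $S^1$, so the maps $\b_n\colon S^1 \to N$ are uniformly Lipschitz (Lipschitz constant bounded by $(|\a|+1)/(2\pi)$). Since $N$ is compact, by the Arzel\`a--Ascoli theorem (or Lemma \ref{LemmaC} applied piecewise) a subsequence of $\b_n$ converges uniformly to a loop $\b_\infty\colon S^1\to N$, which is $1$-Lipschitz up to the same constant and has length $|\b_\infty|\leq \liminf |\b_n| \leq |\a|$ by lower semicontinuity of length. For $n$ large the uniform distance $d(\b_n(t),\b_\infty(t))$ is smaller than the constant $\r$ of Lemma \ref{LemmaA} (applied to $\b_\infty$, using that $N$ is semi-locally simply connected as a subspace — more carefully, one applies Lemma \ref{LemmaA} inside $X$ and then uses that $N$ is a neighborhood of $\a$; alternatively shrink to stay inside $N$). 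Hence $\b_n\simeq\b_\infty$ for all large $n$, so all these $\b_n$ lie in a single homotopy class, i.e.\ $k_n = k$ is constant for large $n$ with $|k|\geq 2$. Now $\b_\infty$ is a non-contractible loop in $N$ of length at most $|\a|$, homotopic to $\a^k$; since $\a$ is the shortest non-contractible loop in $N$ (declaration (4)), we get $|\b_\infty| = |\a|$ and $\b_\infty$ is a shortest representative.

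The final step is to rule out $|k|\geq 2$ for a length-$|\a|$ loop homotopic to $\a^k$. By declaration (4), $\b_\infty$ being a shortest non-contractible loop of length exactly $|\a|$, combined with the bottleneck property of $\a$: any loop of length $|\a|$ in $N$ that is non-contractible is (a reparameterization of) a shortest representative of its free homotopy class in $N$; but a shortest representative of the class of $\a^k$ with $|k|\geq 2$, if it had length $|\a|$, would force $\a$ itself to be homotopic to a strictly shorter loop in that class — or more directly, one can compose $\b_\infty$ with itself appropriately or cut it to extract a non-contractible loop shorter than $|\a|$, contradicting declaration (4). Concretely: if $\b_\infty \simeq \a^k$ with $|k|\geq 2$ and $|\b_\infty| = |\a| < k\cdot(\text{length of shortest loop in class }\a)$... this needs the sharper input that the shortest loop in the class of $\a^k$ has length $|k|\cdot|\a|$, which follows because $\a$ is a geodesic circle (an isometric copy of $S^1_{|\a|}$) inside the geodesically convex $N$, so its $|k|$-fold cover has no shortcut.

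I expect the main obstacle to be exactly this last point: showing that the free homotopy class of $\a^k$ (for $|k|\geq 2$) contains no loop of length close to $|\a|$, i.e.\ that the "stable length" of $\a$ is genuinely $|\a|$ and not smaller. The naive argument — that a loop $\b_\infty$ homotopic to $\a^k$ of length $|\a|$ would, when traversed, project to a degree-$k$ map $S^1\to\a\cong S^1_{|\a|}$ and hence have length at least $|k|\,|\a|$ — requires first homotoping $\b_\infty$ onto $\a$, which may increase length, so the length bound transfers the wrong way. The correct route is likely to invoke Lemma \ref{LemmaB}'s construction more carefully: push $\b_\infty$ to a loop running along $\a$ realizing the map $\a\circ\f$ with $\deg\f = k$, track the length estimate (each partition interval of $\b_\infty$ maps to a geodesic arc on $\a$, and the total signed length picks up $|k|\,|\a|$), and conclude $|\b_\infty| \geq |k|\,|\a| > |\a|$ whenever $|k|\geq 2$ — directly contradicting $|\b_\infty| \leq |\a|$. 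One then sets $D_\a$ to be, say, the infimum of lengths of non-contractible loops in $N$ not homotopic to $\a^{\pm 1}$, which by the above is $\geq 2|\a| > |\a|$, but a cleaner statement takes any $D$ with $|\a| < D < 2|\a|$ — wait, the contradiction argument shows the infimum is bounded below by something strictly exceeding $|\a|$, so such $D$ exists.
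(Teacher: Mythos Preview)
Your approach is exactly the paper's: argue by contradiction, extract a uniformly convergent subsequence via Arzel\`a--Ascoli, and use Lemma~\ref{LemmaA} to transfer the homotopy class to the limit loop. The paper then finishes in one line where you struggle: since $\a$ is \emph{the} shortest non-contractible loop in $N$ (declaration~(4)), the limit --- of length at most $|\a|$ --- is either contractible or is $\a$ (equivalently $\a^-$), and either alternative contradicts the assumption on the sequence.

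Your attempts to bound from below the length of a representative of $[\a^k]$ are therefore unnecessary, and the route via Lemma~\ref{LemmaB} does not work as you wrote it: pushing $\b_\infty$ onto $\a$ bounds each image arc by the diameter of the corresponding partition piece plus a $2\mu$ error, and summing those errors over a fine partition blows up, so one cannot extract $|\b_\infty|\ge |k|\,|\a|$ this way. If you are uneasy reading declaration~(4) as asserting uniqueness, observe that the \emph{proof} of Lemma~\ref{LemmaD} already gives what you need: any loop in $N$ of length at most $|\a|$ (indeed, of length below $2|\a|-2n\r$ in the notation there) pushes to a path along $\a$ of total length less than $2|\a|$, hence of degree in $\{-1,0,1\}$; so your limit $\b_\infty$ is contractible or homotopic to $\a^{\pm1}$, contradicting $|k|\ge 2$ directly.
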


\begin{proof}
The proof is similar to that of Lemma \ref{Lemma1} with a direct application of the Arzel\` a-Ascoli Theorem. Assume that for each $s> |\a|$ there exists a non-contractible loop $\g_s$ in $N$ of length less than $s$, which is not homotopic to $\a$. The bound on the length provides the equicontinuity condition and we can use the Arzel\` a-Ascoli Theorem to assume there exists a decreasing sequence $s_i$ converging to $|\a|$ so that $\g_{s_i}$ pointwise converge to a loop $\g$. As all $\g_{s_i}$ are of length at least $|\alpha|$, so is $\gamma$. As for any $i$, each member of the collection $\{\g_{s_i}\}_{i  \geq i'}$ is of length less than $s_{i'}$, the same holds for $\gamma$. Thus, $\gamma$ is of length $|\alpha|$.

By Lemma \ref{LemmaA} $\g$ is homotopic to almost all $\g_{s_i}$. On the other hand, $\g$ is either $\a$ (or possibly $\a^-$ if the orientation is taken into account) or contractible, which is a contradiction with our assumption. 
\end{proof}
 
\begin{corollary}
 \label{Cor2}
 Each triple of points in $N$ of diameter less than $D/3$ is of winding number either $0,1,$ or $-1$.
\end{corollary}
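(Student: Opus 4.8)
The plan is to deduce this directly from Lemma \ref{LemmaDa} together with the elementary bound relating the diameter of a triple to the length of its fillings. First I would observe that the constant $D = D_\a$ furnished by Lemma \ref{LemmaDa} may be shrunk at will, since the conclusion of that lemma remains true for any smaller positive value; hence we may assume in addition that $D \le \tfrac32|\a|$. With this normalization, any triple $x_1,x_2,x_3\in N$ of diameter less than $D/3$ automatically has diameter less than $|\a|/2$, so that by Definition \ref{DefWind} and Remark \ref{Rem1} its winding number $\o(x_1,x_2,x_3)$ is well defined; this is the only point in the argument that needs a moment's attention.

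Next I would estimate the length of a filling: each pairwise distance $d(x_i,x_j)$ is at most the diameter of the triple, hence strictly less than $D/3$, so every filling of $x_1,x_2,x_3$ is a loop in $N$ of length $d(x_1,x_2)+d(x_2,x_3)+d(x_3,x_1) < D$. Applying Lemma \ref{LemmaDa} to such a filling, it is either contractible, in which case $\o(x_1,x_2,x_3)=0$, or homotopic to $\a$ (respectively $\a^-$), in which case $\o(x_1,x_2,x_3)=1$ (respectively $-1$). These exhaust the possibilities, so the corollary follows.

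I do not expect a genuine obstacle here: the argument is essentially a one-step reduction to Lemma \ref{LemmaDa}, and the only care needed is the bookkeeping between $D/3$ and $|\a|/2$ so that the winding number is defined on all the triples under consideration, which the harmless rescaling of $D$ above handles.
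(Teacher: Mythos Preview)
Your proposal is correct and matches the paper's intent: the paper states Corollary~\ref{Cor2} without proof, treating it as an immediate consequence of Lemma~\ref{LemmaDa}, and your argument spells out exactly that reduction. Your observation that $D$ may be shrunk to satisfy $D \le \tfrac{3}{2}|\a|$ (so that $D/3 < |\a|/2$ and the winding number of the triple is actually defined) is a genuine clarification the paper leaves implicit.
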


\begin{lemma}
 \label{Lemma2.5}
 Assume, $\b$ and  $\g$ are two loops in $N$ with a common point $x_0$, and let $\b * \g$ denote their concatenation at that point. Then $\o(\b * \g)= \o(\b)+ \o(\g)$.
\end{lemma}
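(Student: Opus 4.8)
The plan is to make the per-loop conclusion of Lemma~\ref{LemmaB} compatible with concatenation by running its construction for $\b$ and $\g$ relative to the \emph{same} auxiliary data. First I would reduce to the case in which the common point $x_0$ lies on $\a$: fix $a_0\in\a(S^1)$ and a geodesic $g$ in $N$ from $x_0$ to $a_0$ (it exists and lies in $N$ by geodesic convexity), and replace $\b,\g$ by the conjugated loops $g^-*\b*g$ and $g^-*\g*g$, based at $a_0$. This changes neither $\o(\b)$ nor $\o(\g)$, since a conjugated loop is freely homotopic to the original and $\o$ is a free-homotopy invariant; nor does it change $\o(\b*\g)$, because $(g^-*\b*g)*(g^-*\g*g)\simeq g^-*(\b*\g)*g$. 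Hence we may assume $\b,\g$ are based at $a_0\in\a$.

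Next I would apply the construction in the proof of Lemma~\ref{LemmaB} to $\b$ --- legitimate because $\b\subseteq N$ and, by the choice of $N$ (cf.\ assumption (5)), every loop of $N$ lies in a neighborhood of $\a$ small enough for that lemma --- making two specific choices: take the partition point $b_0$ with $\b(b_0)=a_0$, and take the associated point to be the unique $s_{b_0}\in S^1_{|\a|}$ with $\a(s_{b_0})=a_0$ (so $d(\b(b_0),\a(s_{b_0}))=0$). This yields a map $\f_\b\colon S^1\to S^1_{|\a|}$ with $\f_\b(b_0)=s_{b_0}$ together with a homotopy $\b\simeq\a\circ\f_\b$ whose basepoint track is a loop of small diameter at $a_0$, hence contractible by the standing assumptions; therefore $[\b]=[\a\circ\f_\b]$ in $\pi_1(N,a_0)$. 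By the last line of Lemma~\ref{LemmaB} and the definition of $\o$, $\deg\f_\b=\o(\b)$. Running the same construction for $\g$, using the same point $s_{b_0}$, produces $\f_\g\colon S^1\to S^1_{|\a|}$ based at $s_{b_0}$ with $[\g]=[\a\circ\f_\g]$ in $\pi_1(N,a_0)$ and $\deg\f_\g=\o(\g)$.

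The rest is bookkeeping about maps of circles. Because $\f_\b$ and $\f_\g$ are based at the common point $s_{b_0}$, the concatenation $\f_\b*\f_\g\colon S^1\to S^1_{|\a|}$ is defined, $\a\circ(\f_\b*\f_\g)=(\a\circ\f_\b)*(\a\circ\f_\g)$, and $\deg(\f_\b*\f_\g)=\deg\f_\b+\deg\f_\g=\o(\b)+\o(\g)$. Hence in $\pi_1(N,a_0)$
\[
[\b*\g]=[\b]\,[\g]=[\a\circ\f_\b]\,[\a\circ\f_\g]=\bigl[\a\circ(\f_\b*\f_\g)\bigr],
\]
and, as already used in the proof of Lemma~\ref{LemmaB}, $\a\circ\psi$ is homotopic to the $(\deg\psi)$-fold concatenation of $\a$ for every self-map $\psi$ of $S^1_{|\a|}$. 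Therefore $\b*\g$ is homotopic to the $\bigl(\o(\b)+\o(\g)\bigr)$-fold concatenation of $\a$, i.e.\ $\o(\b*\g)=\o(\b)+\o(\g)$. The step that requires real care --- and the main obstacle --- is the claim that the homotopy supplied by Lemma~\ref{LemmaB} can be taken rel $b_0$ (equivalently, that its basepoint track is null-homotopic), since that is exactly what converts the free-homotopy conclusion of Lemma~\ref{LemmaB} into the identity $[\b]=[\a\circ\f_\b]$ in $\pi_1(N,a_0)$ that drives the concatenation argument; it holds because the track is a small loop based at a point of $\a$, but verifying it means inspecting the construction inside the proof of Lemma~\ref{LemmaB} rather than merely quoting the lemma. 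The reduction of the first paragraph and the degree computations are routine.
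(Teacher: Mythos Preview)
Your proof is correct but takes a substantially more elaborate route than the paper's. The paper simply observes that, by assumption~(5), every loop in $N$ is freely homotopic to some power of $\a$, so $\pi_1(N)$ is cyclic and in particular abelian; writing the free homotopies $\b\simeq k\a$ and $\g\simeq k'\a$ as based homotopies at $x_0$ introduces a connecting path $\d$ (from a point on $\a$ to $x_0$), and then abelianness of $\pi_1(N)$ kills the conjugation $\d*(\,\cdot\,)*\d^-$ in one stroke, yielding $\b*\g\simeq(k+k')\a$. Your argument instead re-opens the construction inside the proof of Lemma~\ref{LemmaB} to manufacture explicit based maps $\f_\b,\f_\g\colon S^1\to S^1_{|\a|}$ and then appeals to additivity of degree under concatenation --- in effect re-proving by hand that the winding map $\pi_1(N)\to\ZZ$ is a homomorphism. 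What your approach buys is an explicit, constructive witness for the additivity; what it costs is the detour through the internals of Lemma~\ref{LemmaB} and the rel-basepoint verification you flag as the main obstacle. The paper's one-line argument sidesteps both by invoking the abelian fundamental group directly.
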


\begin{proof}
 If $\b$ is homotopic to $k \a$ and $\g$ is homotopic to $k' \a$ then $\b * \g$ is homotopic to $(k \a) * \d * (k' \a) * \d^-$ with $\d$ being a loop from a point on $\a$ to $x_0$ (traced by $x_0$ via  the reversed homotopy of $\b$) and back to $\a$  (traced by  $x_0$ via the homotopy of $\g$). As the fundamental group of $N$ is Abelian by Lemma \ref{LemmaB} $(k \a) * \d * (k' \a) * \d^- \simeq (k \a) * (k' \a) \simeq (k+k')\a$.
\end{proof}

Recall that an oriented quadruple ($3$-simplex) induces four oriented triples ($2$-simplices) via the boundary map. 

\begin{lemma}
 \label{Lemma3} 
 Assume $\s= \la x_1, x_2, x_3, x_4 \ra$ is an oriented quadruple in $N$ of diameter less than $\min \{\mu_T, D/3\}$ and fix an orientation on $\a$. Then either none, two, or four of the induced oriented triples are circumventing and their winding numbers add up to $0$.
\end{lemma}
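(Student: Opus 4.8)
The plan is to reduce the statement to a single identity — that the signed sum of the winding numbers of the four triangles occurring in $\partial\s$ is zero — and then to read off the parity claim. Write
\[
\partial\s \;=\; \la x_2,x_3,x_4\ra - \la x_1,x_3,x_4\ra + \la x_1,x_2,x_4\ra - \la x_1,x_2,x_3\ra ,
\]
and let $w_1,w_2,w_3,w_4$ be the winding numbers of $\la x_2,x_3,x_4\ra$, $\la x_1,x_3,x_4\ra$, $\la x_1,x_2,x_4\ra$, $\la x_1,x_2,x_3\ra$ respectively. Each of these triples has diameter at most $\diam\s<\min\{\mu_T,D/3\}\le D/3$, so by Corollary \ref{Cor2} we have $w_i\in\{-1,0,1\}$, while Remark \ref{Rem1} (the diameter being less than $|\a|/2$) makes each $w_i$ well defined; moreover an induced oriented triple is circumventing exactly when its winding number is $\pm 1$, i.e. when the corresponding $w_i$ is nonzero. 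Granting $w_1-w_2+w_3-w_4=0$, the number of nonzero $w_i$ cannot be $1$ or $3$, since a sum of an odd number of terms from $\{\pm1\}$ (the remaining summands being $0$) is odd; hence it is $0$, $2$, or $4$. This is exactly the assertion, the ``winding numbers'' in the statement being $w_1,-w_2,w_3,-w_4$ with sum $w_1-w_2+w_3-w_4=0$ (and of course the vanishing $w_i$ contribute nothing, so it does not matter whether one sums over all four or only over the circumventing ones).

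To prove the identity I would first record that $\o$ is a homology invariant. By Lemma \ref{Lemma2.5} the map $\b\mapsto\o(\b)$ is additive under concatenation, so it defines a homomorphism $\pi_1(N,x_0)\to\ZZ$ for any basepoint $x_0$; since the target is abelian this factors through $H_1(N;\ZZ)$, and since $\pi_1(N)$ is abelian (Lemma \ref{LemmaB}) the free homotopy class of a loop, hence its winding number, depends only on its class in $H_1(N;\ZZ)$. Thus $\o$ induces a homomorphism $\bar\o\colon H_1(N;\ZZ)\to\ZZ$ with $\bar\o([\b])=\o(\b)$ for every loop $\b$ in $N$. Next, choose for each pair $i<j$ a geodesic $\g_{ij}$ in $N$ from $x_i$ to $x_j$ — these exist and stay in $N$ because $N$ is geodesically convex — and set $\g_{ji}:=\g_{ij}^-$. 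These data define a continuous map $f$ from the $1$-skeleton of the standard $3$-simplex $\Delta^3$ into $N$, sending the vertex $v_i$ to $x_i$ and the edge $[v_i,v_j]$ to $\g_{ij}$. For a $2$-face $\la v_i,v_j,v_k\ra$ the boundary $1$-cycle $\partial\la v_i,v_j,v_k\ra$ is carried by $f$ to (a reparametrization of) the loop $\g_{ij}*\g_{jk}*\g_{ki}$, which is a filling of the triple $\la x_i,x_j,x_k\ra$; hence the $\bar\o$-value of its class in $H_1(N;\ZZ)$ equals the winding number of that triple.

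Finally, let $F_1,\dots,F_4$ be the $2$-faces of $\Delta^3$ with the signs appearing in $\partial\Delta^3$, so that $\partial F_1-\partial F_2+\partial F_3-\partial F_4=\partial(\partial\Delta^3)=0$ as $1$-chains in the $1$-skeleton of $\Delta^3$ (every edge cancels). Applying $f_*$, the $1$-cycles $c_i:=f_*(\partial F_i)$ in $N$ satisfy $c_1-c_2+c_3-c_4=f_*(0)=0$ in $C_1(N;\ZZ)$, hence $[c_1]-[c_2]+[c_3]-[c_4]=0$ in $H_1(N;\ZZ)$; applying $\bar\o$ and using the previous paragraph gives $w_1-w_2+w_3-w_4=0$. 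The only genuinely delicate points are the two bookkeeping steps — that $\o$ really descends to $H_1(N;\ZZ)$, where both the additivity of Lemma \ref{Lemma2.5} and the commutativity of $\pi_1(N)$ (Lemma \ref{LemmaB}) are needed, together with the remark that free homotopy classes suffice because $\pi_1(N)$ is abelian; and that the sign pattern produced by $\partial\partial=0$ is precisely the pattern in which the four induced winding numbers are to be combined. The geometric content, namely that a single fixed system of geodesics furnishes mutually consistent fillings of all four triples, is nothing more than Remark \ref{Rem1}.
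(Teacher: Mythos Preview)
Your proof is correct and follows essentially the same approach as the paper. The paper's version is more direct: it simply observes that concatenating fillings $L_1,L_2,L_3$ of three of the induced faces at the common vertex $x_4$ gives a loop homotopic to $L_4^{-}$, then invokes Lemma~\ref{Lemma2.5} and Corollary~\ref{Cor2}; your homological packaging via $\bar\o\colon H_1(N;\ZZ)\to\ZZ$ and $\partial\partial=0$ encodes exactly the same cancellation.
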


\begin{proof}
 Assume $L_1, L_2, L_3$ and $L_4$ are oriented fillings of the induced oriented triples $\la x_2, x_3, x_4 \ra ,\la x_3, x_1, x_4 \ra, \la x_1, x_2, x_4\ra,  \la x_2, x_1, x_3\ra$. Choosing $x_4$ as the common point to perform a concatenation, note that $L_1 * L_2 * L_3 \simeq L_4^-$ hence by Lemma \ref{Lemma2.5} the winding numbers add up to $0$. The proof is concluded by observing that the winding numbers can be either $0, 1$ or $-1$ by Lemma \ref{LemmaDa}.
 \begin{figure}[htbp]
\begin{center}
\includegraphics{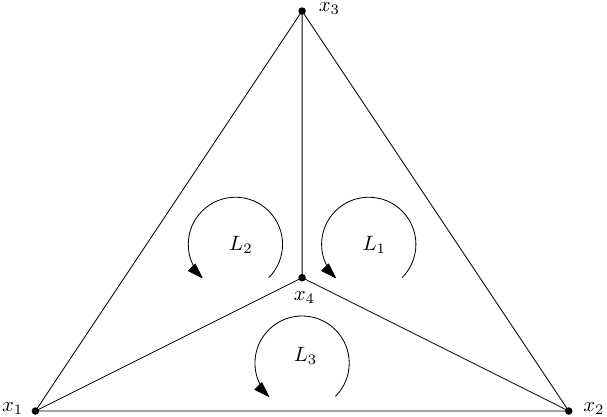}
\caption{A sketch of a decomposition of Lemma \ref{Lemma3}.}
\label{Fig3}
\end{center}
\end{figure}
\end{proof}

\section{Selective Rips complexes}

In this section we define selective Rips complexes and the winding number of a $2$-dimensional homology class in appropriate selective Rips complexes.

For the sake of clarity we first recall a definition of (open) Rips complexes that will be used here. 
Given a scale $r>0$, the \textbf{Rips complex} $\Rips(X; r)$ is an abstract simplicial complex with the vertex set $X$ defined by the following rule: a finite $\s \subset X$ is a simplex iff $\diam(\s)<r$.

\begin{definition}
 \label{DefSRips}
Let $Y$ be a metric space, $r_1 \geq r_2, n\in \NN$.  \textbf{Selective Rips complex} $\sRips(Y; r_1, n, r_2)$ is an abstract simplicial complex defined by the following rule: a finite subset $\s\subset Y$ is a simplex iff the following two conditions hold:
\begin{enumerate}
 \item $\diam (\s) < r_1$;
 \item there exist subsets $U_0, U_1, \ldots,  U_n\subset U$ of diameter less than $r_2$ such that $\s \subset U_0 \cup U_1 \cup \ldots \cup U_n$.
\end{enumerate} 
\end{definition} 

Condition (1) of Definition \ref{DefSRips} implies that $\sRips(Y; r_1, n, r_2)$ is a subcomplex of $\Rips(Y; r_1)$. Condition (2) of Definition \ref{DefSRips} implies that we should be able to partition (cluster) vertices of $\s$ into $n+1$ clusters of diameter less than $r_2$.

\begin{remark}
 Given a subset of a metric space, there are two natural notions of its size. One is the radius of its smallest enclosing ball (or the infimum of radii, if no single ball is the smallest). The second one is its diameter. Combined with the nerve construction, sets of prescribed ``size'' in the first case induce \v Cech complexes and in the second case Rips complexes. See \cite{ZV3} for a detailed exposition of this story.
 
Definition \ref{DefSRips} combines a global (1) and local (2) bound on the size of groups of vertices of a simplex $\s$. In this case, both sizes are expressed in terms of a diameter. We might as well change condition (2) into a version where the size in expressed in terms of the radius of the smallest enclosing ball in the following way:

\emph{there exist $x_0, x_1, \ldots, x_n\in \sigma$  (or, alternatively, $x_0, x_1, \ldots, x_n\in X$) such that $\s \subset N(x_0, r_2) \cup N(x_1, r_2) \cup \ldots \cup N(x_n, r_2)$.}

The results of this paper also hold for this global-Rips local-\v Cech construction with the same arguments albeit slightly different parameters. Similar arguments could also be devised for selective versions of \v Cech complexes: global-\v Cech local-Rips, and global-\v Cech local-\v Cech.
\end{remark}

Throughout this section we will maintain the notational definitions established in the previous section. We additionally make the following \textbf{declarations}:
\begin{itemize}
 \item define $\nu = d(\cN(\a, T), N^C)>0$;
 \item choose $a\in (|\a|/3, \min\{D_\a /3, \mu_T/3\})$ and $b \in (0, a]$;
 \item  choose an Abelian group $G$.
\end{itemize}

In particular, given a neighborhood $N$ (due to Lemma \ref{LemmaD}) of $\alpha$, choose $D_\alpha$ according to Lemma \ref{LemmaDa}, choose $T>0$ and define the corresponding $\nu$, choose $\mu_T$ according to Lemma \ref{Lemma1}, and pick $a$ and $b$ within the ranges of the stated inequalities.

\begin{definition}
 \label{DefWNHomology}
Let $c=\sum_i \l_i \s_i$ be a $2$-cycle in the chain group $C_2(\sRips(X; r_1, 2, r_2);G)$ with $\l_i\in G$ and $\s_i \in\sRips(X; r_1, 2, r_2)$ being represented as oriented triples $\forall i $. The \textbf{winding number} of $c$ in $N$ is defined as
 $$
 \o_N(c)= \sum_{\s_i \subset N} \l_i \o_N(\s_i)\in G.
 $$
\end{definition}

\begin{proposition}
\label{PropMain1}
Assume that either $a < \nu$ or $b < \min \{|\alpha|-2a, \nu\}$, with the later condition implicitly requiring that $|\alpha|-2a > 0$.
Then the winding number of a $2$-cycle $c$ in $N$ is an invariant of the homology class $[c]\in H_2(\sRips(X; a, 2, b);G)$.
\end{proposition}

\begin{proof}
 It suffices to prove that $\o_N(c + \di \s)=\o_N(c)$ for any oriented $3$-simplex in $\sRips(X; a, 2, b)$ or equivalently, that $\o_N(\di \s)=0$. Assume an oriented face $\s'$ of $\di \s$ is contained in $N$ and circumventing. As $a< \mu_T$ Lemma \ref{Lemma1} implies $\s'$ is contained in $\cN(\a,T)$.
 We now claim that all four vertices of $\s$ are contained in $N$:
\begin{itemize}
 \item If $a< \nu$ the claim follows from the definition of $\nu$ as $\diam(\s) < a$. 
 \item If $b < |\alpha|-2a$, then the pairwise distances between the three vertices of $\s'$ are larger than $b$ as their filling is of length at least $|\alpha|$. Hence the distance of the fourth vertex of $\sigma$ to one of the vertices of $\sigma'$ is less than $b$. If additionally, $b < \nu$, then the claim holds by the definitions of $\nu$ and $\sRips(X; a, 2, b)$. 
\end{itemize} 
 By Lemma \ref{Lemma3} the sum of the winding numbers of the four $2$-simplices of $\di \s$ equals $0$ hence $\o_N(\di \s)=0$.
 \end{proof}

It is easy to see that the winding number in this setting determines a homomorphism $H_2(\sRips(X; a, 2, b);G)\to G$.

\section{Detection of geodesic circles}
 In this section we combine the results of previous sections to prove how selective Rips complexes may detect geodesic circles, which are also bottleneck loops in geodesic spaces. The main technical result states that such loops may induce a non-trivial two-dimensional homology class in persistent homology. 
 
 For a loop $\g\colon I \to X$ ($\g(0)=\g(1)$) and $r>0$, a $r$-\textbf{sample} of $\g$ is a sequence $\g(t_0), \g(t_1), \alpha(t_2), \ldots, \g(t_k)$, where $t_0=0< t_1 < \ldots < t_k=1$ and for each $i$ $\diam(\g|_{[t_i, t_{i+1}]}) < r$ holds. A  $r$-sample will often be identified by a simplicial loop or a simplicial $1$-chain in $\sRips(X;a,2,b)$ if $a \geq r$.
 
\begin{theorem}
 \label{ThmMain1}
 Let $X$ be a geodesic locally compact, semi-locally simply connected space and let $G$ be an Abelian group. Assume $\a$ is a geodesic circle in $X$ satisfying the following properties:
\begin{enumerate}
 \item $\a$ is a bottleneck loop;
 \item $\a$ is homologous to a non-trivial $G$-combination of loops $\beta_1, \beta_2, \ldots, \beta_k$ of length at most $|a|$, none of which equals $\a$ as a subset of $X$; 
 \item $\a$ has arbitrarily small geodesically convex neighborhoods.
\end{enumerate}
Then there exist bounds $B_1>|\a|/3$ and  $B_2>0$ such that for all increasing bijections $a \geq b\colon (0,\infty)\to (0,\infty)$, and for all $r>0$ such that $B_1>a(r)> |\a|/3$ and $B_2>b(r)$, there exists a non-trivial 
$$
Q_r \in H_2(\sRips(X;a(r),2,b(r));G) 
$$
satisfying the following properties: 
\begin{enumerate}
 \item $\forall r_1<r_2$ with $a(r_i)> |\a|/3$ and $b(r_i)<B, \forall i$, we have $i^G_{r_1, r_2} (Q_{r_1})=Q_{r_2}$, where $i^G_{r_1, r_2} \colon H_2(\sRips(X;a(r_1),2,b(r_1));G)  \to H_2(\sRips(X;a(r_2),2,b(r_2));G) $ is the natural inclusion induced map.
 \item $\forall q: a(q) \leq |\a|/3$ there exists no $Q_q$ with $i_{q,r}(Q_q)=Q_r$.
\end{enumerate}
\end{theorem}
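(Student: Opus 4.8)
# Proof Proposal for Theorem \ref{ThmMain1}

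\textbf{Overall strategy.} The plan is to construct the class $Q_r$ explicitly as a $2$-cycle whose winding number in $N$ equals the generator $1 \in \ZZ$ (or rather a nonzero element of $G$, using the homomorphism $\o_N$ of Proposition \ref{PropMain1}), so that nontriviality and compatibility under inclusion follow immediately from that proposition, while the vanishing below $|\a|/3$ follows from a separate geometric argument. The winding number invariant does all the heavy lifting: once we exhibit \emph{one} cycle with nonzero winding number, everything about the \emph{upper} part of the bar is automatic.

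\textbf{Step 1: build the cycle $Q_r$.} Using hypothesis (2), write $\a \sim \sum_j g_j \b_j$ in $H_1(X;G)$ where each $\b_j$ has length at most $|a| < a(r)$ and none equals $\a$ as a subset. Take a sufficiently fine $r$-sample of $\a$ and of each $\b_j$; since $a(r) > |\a|/3$ strictly, a sample of $\a$ with three points whose consecutive arcs each have diameter less than $a(r)$ bounds a single $2$-simplex $\s_\a$ in $\sRips(X;a(r),2,b(r))$ \emph{provided} that simplex also satisfies the clustering condition (2) of Definition \ref{DefSRips} — which it does, since a triangle is covered by $3 = n+1$ singletons when $n=2$. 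More carefully: a finer sample $P$ of $\a$ together with the $2$-simplices filling consecutive triples produces a simplicial cycle $z_\a$ whose underlying loop is a sample of $\a$; similarly each $\b_j$ has a sample $z_{\b_j}$. Because each $\b_j$ (and $\a$) has length at most $|a|$, and because they are homologous in $X$, for $a(r), b(r)$ large enough the homology $\sum_j g_j z_{\b_j} - z_\a = \di w$ holds already in $\sRips(X;a(r),2,b(r))$: the chain $w$ is obtained by subdividing a singular homology realizing the relation in $X$ and invoking the standard fact that for loops of bounded length the Rips nerve captures $H_1$ of $X$ at a fixed scale (this is where we pin down $B_1, B_2$). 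Now fill each $z_{\b_j}$ by a $2$-chain $F_j$ inside a thin tube around $\b_j$ (possible because $\b_j$ is short, its diameter is $<|a|<a(r)$, so it lies in a single simplex-sized region and is nullhomotopic there — \emph{or} it persists but never crosses $N$; this needs care). Then set
$$
Q_r := \Bigl[\, w + \sum_j g_j F_j \,\Bigr] \in H_2(\sRips(X;a(r),2,b(r));G).
$$
The point is that the only $2$-simplices of this cycle lying in $N$ and circumventing are those coming from filling $z_\a$-adjacent triples, and they contribute winding number a single nonzero multiple of $[\a]$, while the $F_j$ live away from $\a$ (since $\b_j \neq \a$) and contribute $0$. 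Hence $\o_N(Q_r) \neq 0$ in $G$.

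\textbf{Step 2: nontriviality, compatibility, and the lower bound.} Nontriviality of $Q_r$ is now immediate: $\o_N$ is a homomorphism $H_2(\sRips(X;a(r),2,b(r));G) \to G$ (or to $\ZZ \otimes G$) by Proposition \ref{PropMain1}, and $\o_N(Q_r) \neq 0$. Compatibility $i^G_{r_1,r_2}(Q_{r_1}) = Q_{r_2}$ holds because the explicit cycles can be chosen compatibly — refine the samples monotonically — and because $\o_N$ commutes with inclusion, so the images agree on winding number; a little more is needed to get equality of classes rather than merely equal winding number, which we obtain by taking the \emph{same} chain-level representative (the inclusion of chain groups sends our representative for $r_1$ to a valid representative for $r_2$, using $a(r_1) \le a(r_2)$, $b(r_1) \le b(r_2)$). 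For property (2) — no preimage when $a(q) \le |\a|/3$ — suppose $Q_q$ existed with $i_{q,r}(Q_q) = Q_r$. By Remark \ref{Rem1}, any circumventing triple has diameter at least $|\a|/3$, so at scale $a(q) \le |\a|/3$ \emph{no} simplex of $\sRips(X;a(q),2,b(q))$ in $N$ is circumventing; hence $\o_N$ vanishes identically on $H_2$ at that scale, forcing $\o_N(i_{q,r}(Q_q)) = 0 \neq \o_N(Q_r)$, a contradiction. (The edge case $a(q) = |\a|/3$ exactly: then a circumventing triple would have diameter exactly $|\a|/3$ and by Remark \ref{Rem1} its vertices lie on $\a$, but then the "arc" diameters are not \emph{strictly} less than $a(q)$, so such a simplex is still excluded — one should double-check the strict inequality in the Rips definition handles this cleanly.)

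\textbf{Main obstacle.} The delicate point is Step 1: showing that the relation $\a \sim \sum g_j \b_j$, which holds in $H_1(X;G)$, is already witnessed by a $2$-chain in $\sRips(X;a(r),2,b(r))$ at a \emph{uniform} scale depending only on $\a$ and the $\b_j$ (this determines $B_1, B_2$ and uses local compactness plus the semi-local simple connectivity to get a Lebesgue-number-type bound), \emph{and} that one can choose the fillings $F_j$ of the $\b_j$-samples so that no circumventing simplex in $N$ appears among them — equivalently, that the part of the cycle carrying nonzero winding number is exactly the $\a$-tube part. The cleanest way is probably: choose $r$-samples so fine that $w$ and all $F_j$ have every $2$-simplex of diameter $< b(r) \le |\a|/3$, whence by Remark \ref{Rem1} \emph{none} of those simplices can be circumventing (too small), so $\o_N$ of the whole correction term vanishes and only the $z_\a$-filling — whose triangles are genuinely of diameter $\approx |\a|/3$, just above the threshold — carries the winding number. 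Verifying that such a fine filling of $z_\a$ itself exists inside $\sRips(X;a(r),2,b(r))$ with the right total winding number (it should be $[\a]$ filled by a "fan" triangulation of $S^1_{|\a|}$ pushed into $X$, each triangle circumventing with winding number summing telescopically to $1$) is the last thing to nail down, and is where the hypothesis that $\a$ is a geodesic circle (so its sample triangles have controlled geometry) is essential.
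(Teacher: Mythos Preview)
Your overall architecture matches the paper's: build an explicit $2$-cycle from the singular homology relation, cap both $\a$ and the $\b_j$ by ``fan'' fillings of their samples, and use the winding-number homomorphism of Proposition~\ref{PropMain1} for nontriviality and for the obstruction at scale $\le |\a|/3$. The compatibility argument (one fixed chain-level representative works for all $r$) and the argument for (2) are essentially identical to the paper's.

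There is, however, a genuine gap in your Step~1 and ``Main obstacle''. Your proposed way to ensure the $\b_j$-fillings $F_j$ contribute zero winding number is to make every $2$-simplex in $F_j$ have diameter $< |\a|/3$. That is not generally possible: the hypothesis allows $|\b_j| = |\a|$, and in that case any filling of a sample of $\b_j$ in $\sRips(X;a(r),2,b(r))$ must contain at least one $2$-simplex of diameter $\ge |\b_j|/3 = |\a|/3$, since otherwise $\b_j$ would bound at a strictly smaller Rips scale (contradicting, e.g., that $\b_j$ may be homologically essential). The paper handles this differently. It uses the specific ``three-apex'' fan of Figure~\ref{FigNullhomotopy}, whose only large simplex is the central triangle on the three equidistant points; that triangle has diameter exactly $|\b_j|/3 \le |\a|/3$. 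One then argues directly: if that central triangle were circumventing, its diameter would have to equal $|\a|/3$ (Remark~\ref{Rem1}), its vertices would lie on $\a$, and by geodesic convexity of $N$ the entire $\b_j$ would be forced into $N$ and hence coincide with $\a$ --- contradicting the hypothesis that no $\b_j$ equals $\a$ as a set. This is precisely where hypothesis~(2) (``none of which equals $\a$'') and hypothesis~(3) (geodesically convex neighborhoods) are used, and your proposal does not invoke them.

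Two smaller points. First, your displayed formula $Q_r = [w + \sum_j g_j F_j]$ is not a cycle: its boundary is $-z_\a$, so you must also add the fan filling of $z_\a$ (this is the term $\sum_l \s_l$ in the paper). Second, a single-apex fan on $\a$ produces triangles of diameter up to $|\a|/2$, which need not lie in $\sRips(X;a(r),2,b(r))$ when $a(r)$ is only slightly above $|\a|/3$; the three-apex fan of Figure~\ref{FigNullhomotopy} is what keeps all diameters $\le |\a|/3$, with exactly one circumventing (central) triangle carrying the winding number, not ``each triangle circumventing \ldots\ telescopically''.
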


\begin{remark}
\label{Rem:brilliant}
 Bounds $B_1, B_2$ in Theorem \ref{ThmMain1} are not unique and arise from the assumptions of Proposition \ref{PropMain1}. 
 In the proof of Theorem \ref{ThmMain1} the condition $b < \min \{|\alpha|-2a, \nu\}$ is used.
 A small increase in $B_1$ accompanied by an appropriate decrease in $B_2$ will result in a different pair of parameters for which the theorem still holds. This interplay has the following geometric interpretation: the longer the lifespan of $Q_r$ is, the thinner the triangles in selective Rips complexes need to be.
 
The other condition of Proposition \ref{PropMain1}, namely $a < \nu$, can be used to derive a version of Theorem \ref{ThmMain1} for Rips complexes. Following the notation of the proof below, suppose $N=N(\a, \widetilde T) \cong S^1 \times (0,1)$ is a compact geodesically convex neighborhood of $\a$, in which $\a$ is the shortest non-contractible loop. Assume $\widetilde T > |\a|/3$. Then in the proof below we can choose $T\in (0, \widetilde T)$ so that $\nu = \widetilde T-T > |\alpha|/3$, and hence the conditions of Proposition \ref{PropMain1} hold for $a$ slightly above $|\alpha|/3$ and $b=a$, i.e., in the case of Rips complexes. 

\textbf{In particular, if $\widetilde T > |\a|/3$ then the conclusions of Theorem \ref{ThmMain1} and Corollaries \ref{Cor5} and \ref{Cor5} hold for Rips complexes.}
\end{remark}

\begin{proof}[Proof of Theorem \ref{ThmMain1}]
The construction of $Q_r$ appeared in \cite{ZV2}. There exist singular $2$-simplices $\tilde\Delta_{\tilde j}$ in $X$ and $\tilde h_{\tilde j}, g_i\in G$ in $X$ so that the following equality of $1$-chains holds:
$$
[\alpha]_G = \sum_{i=1}^k g_i [\beta_i]_G + \di \sum_{\tilde j=1}^{\tilde k'} \tilde h_{\tilde j} [\tilde\Delta_{\tilde j}]_G.
$$
We further subdivide  singular $2$-simplices $\tilde\Delta_{\tilde j}$ into smaller singular $2$-simplices $\Delta_j$ so that for some $h_j\in G$ the equality
 $$
L_\alpha = \sum_{i=1}^k g_i L_i + \di \sum_{j=1}^{k'} h_j \Delta_j
$$
holds in the second chain group of $\Rips(X,|\alpha|/3)$ with the following declarations and conditions:
\begin{itemize}
 \item the diameter of each singular simplex $\Delta_j$ is less than $|\alpha|/3$;
\item each singular $2$-simplex $\Delta_j$ also represents the $2$-simplex in $\Rips(X,|\alpha|/3)$ determined by its vertices (and the inherited orientation).
 \item $L_\alpha$ and $L_i$ are  $(|\alpha|/3)$-samples of $\alpha$ and $\beta_i$ correspondingly;
 \item each close $(|\alpha|/3)$-sample above contains three points on the corresponding loop, that divide the loop into three parts of the same length. We call these points equidistant points and keep in mind that the ``equidistance'' refers to the length of the loop $\a$ or $\b_i$ between them.
\end{itemize}

With this we have transitioned from singular $1$-chains in $X$ to simplicial $1$-chains in $\sRips(X; a(r),2,b(r); G)$ with $a(r) \geq |\alpha|/3$. Next we use the type of decomposition presented by Figure \ref{FigNullhomotopy}  to express $1$-chains $L_\a$ and $L_i$ as boundaries: 
\begin{itemize}
 \item $L_i=\di \sum_{p=1}^{k_p} \tau_{i,p}$;
 \item $L_\alpha = \di \sum_{l=1}^{k_\alpha}  \sigma_l$.
\end{itemize}
\begin{figure}
\begin{tikzpicture}[scale=1]

\node (q1) at (60:1.9){}; 

\node (q2) at (180: 1.9){};

\node (q3) at (-60:1.9){};

\draw[top color=gray,bottom color=white, fill opacity =1] (60:1.9) -- (180:1.9) -- (-60:1.9) -- cycle;
\foreach \x  in {1,2,3,5,6,7,9,10,11}
	\draw (-\x * 30+90:1.9) -- (-\x * 30+90:2.0) (-\x * 30+60:2.4) node {$x_{\x }$};
\foreach \x  in {0}
	\draw (-\x * 30+90:1.9) -- (-\x * 30+90:2.0) (-\x * 30+60:2.4) node {$x_{\x }$};
\foreach \x  in {4}
	\draw (-\x * 30+90:1.9) -- (-\x * 30+90:2.0) (-\x * 30+60:2.4) node {$x_{\x }$};
\foreach \x  in {8}
	\draw (-\x * 30+90:1.9) -- (-\x * 30+90:2.0) (-\x * 30+60:2.7) node {$x_{\x }$};
\foreach \x  in {0, 1, ..., 11}
	\draw [fill=black] (-\x * 30+90:1.9) circle (.1cm);
\foreach \x  in {0, 1, ..., 11}
	\draw (-\x * 30+90:1.9) -- (-\x * 30+120:1.9);


\foreach \x  in { 2, 3,..., 6}
	\draw (q1) -- (-\x * 30+120:1.9);
\foreach \x  in { 6,7, ..., 10}
	\draw (q3) -- (-\x * 30+120:1.9);
\foreach \x  in {10, 11, 12, 1, 2}
	\draw (q2) -- (-\x * 30+120:1.9);

\end{tikzpicture}
\caption{Expressing $(|\a|/3)$-loop $x_0, x_1, \ldots, x_{11}, x_0$ as the boundary of a $2$-chain in $\sRips(X;a(r),2,b(r))$, we span a shaded triangle ($2$-simplex) on the three equidistant points $x_0, x_4, x_8$ to obtain the central triangle, and then fill in other (non-shaded) triangles. Note that all triangles are of diameter at most $|\a|/3$. The length of a filling of the central triangle is at most $|\a|$, while the lengths of fillings of other triangles is at most $2|\a|/3$.}
\label{FigNullhomotopy}
\end{figure}
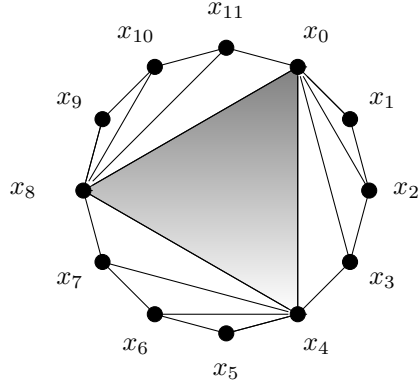

Together we obtain 
$$
\di \sum_{l=1}^{k_\alpha}  \sigma_l = \di \sum_{i=1}^k g_i \sum_{p=1}^{k_p} \tau_{i,p} + \di \sum_{j=1}^{k'} h_j \Delta_j
$$
and define a $2$-cycle
$$
C=\sum_{l=1}^{k_\alpha} \sigma_l - \sum_{i=1}^k  \sum_{p=1}^{k_p} g_{i}\tau_{i,p} -  \sum_{j=1}^{k'} h_j \Delta_j.
$$
Defining $Q_r=[C]$ we directly see that (1) holds as $C$ does not depend on parameter $r$ specifically but rather on its lower (unattained) bound $|\a|/3$.

We proceed by showing $Q_r$ is non-trivial using the winding number. By our assumptions and Lemma \ref{LemmaD} there exists a compact geodesically convex neighborhood $N$ of $\a$ in which $\a$ is the shortest non-contractible loop. By Lemma \ref{LemmaB} we may assume that each loop in $N$ is homotopic to the $k$-fold concatenation of $\a$ for some $k\in \ZZ$. Choose $T>0$ such that $N \supset \cN(\a, T)$ and $\nu=d(\cN(\a, T), N^C)>0$. Choose $\mu_T$ according to Lemma \ref{Lemma1} and $D$ according to Lemma \ref{LemmaDa}. Choose $B_1 \in (|\alpha|/3, \min\{\mu_T, D\}/3)$ so that $|\alpha|-2 B_1>0$, and define $B_2=\min \{|\alpha|-2 B_1, \nu\}$ (in the last two paragraphs of Remark \ref{Rem:brilliant}, this sentence is handled using condition $a < \nu$ to obtain the conclusion for the Rips complex). By Proposition \ref{PropMain1} the winding number $\o_N$ is an invariant of each single homology class in $H_2(\sRips(X;a(r),2,b(r));G) $. Obviously the winding number of the trivial class is $0$. On the other hand $\o_N(C) = \pm 1$ by the following argument:
\begin{itemize}
 \item All simplices of the form $\Delta_j$, which are contained in $N$, are of diameter less than $|\a|/3$ hence are of winding number $0$ by Remark \ref{Rem1}.
 \item Each non-central triangle (see Figure \ref{FigNullhomotopy}) of the form $\tau_{i,p}$ or $\s_l$, which is contained in $N$, has a filling of length  at most $2|\a|/3$ hence is of winding number $0$ by Lemma \ref{LemmaDa} and the bottleneck property.
 \item Suppose a central triangle $\Delta$ in $N$ is of the form $\tau_{i,p}$. As it is of diameter at most $|\a|/3$  it is of winding number $\pm 1$  or $0$ by Corollary \ref{Cor2}. Assume $\o_N(\Delta)=\pm 1$. By the bottleneck property $\diam(\Delta)=|\a|/3$ and by geodesic convexity of $N$ the entire $\b_i$ is contained in $N$. This implies that $\b$ equals $\a$ as a subset of $X$, which contradicts our assumptions on $\b_i$.
 \item The central triangle of the form $\s_l$ is of winding number $\pm 1$ as its filling is $\a$.
\end{itemize}

In order to prove (2) note that by Remark \ref{Rem1} and Lemma \ref{LemmaDa} no triple of diameter less than $|\a|/3$ has a non-trivial winding number $\o_N$.
\end{proof}

The following two corollaries recast our main result in the setting of persistent homology, i.e., in the case when the coefficients form a field. In both cases persistence diagrams and corresponding barcodes exist by compactness (see the q-tameness condition of Proposition 5.1 of \cite{Cha2} for details). Given positive $a<b$ and a field $\FF$, an \textbf{elementary interval} $\{\FF\}_{r\in (a,b \rangle}$ (where ``$\ \rangle$'' denotes either an open or a closed interval endpoint) is a collection of vector spaces $\{V_r\}_{r>0}$ defined as:
\begin{itemize}
 \item $V_r \cong \FF$ for $r\in (a,b \rangle$,
 \item $V_r =0$ for $r\notin (a,b \rangle$,
\end{itemize}
and bonding maps $V_r\to V_{r'}$ for all $r<r'$ being identities whenever possible (and trivial maps elsewhere).

While the following two results are expressed using $\sRips(X;r,2,s \cdot r)$ for simplicity, the results of course also hold for $\sRips(X;a(r),2,b(r))$ with appropriate functions $a(r)$ and $b(r)$, see Figure \ref{FigEnd1} and the corresponding example for such an occurence.

\begin{corollary}
 \label{Cor5}
 Let $X$ be a compact geodesic semi-locally simply connected space and let $\FF$ be a field. Assume $\a$ is a geodesic circle in $X$ satisfying the following properties:
\begin{enumerate}
 \item $\a$ is a bottleneck loop;
 \item $\a$ is homologous to a non-trivial $\FF$-combination of loops $\beta_1, \beta_2, \ldots, \beta_k$ of length at most $|a|$, none of which is homotopic to $\a$ or $\a^-$; 
 \item $\a$ has arbitrarily small geodesically convex neighborhood.
\end{enumerate}
Then there exists $S>0$ such that for each $s<S$ the persistent homology 
$$
\{H_2(\sRips(X;r,2,s \cdot r);G) \}_{r>0}
$$
contains as a direct summand an elementary interval of the form
$$
\{\FF\}_{r\in (|\a|/3, \delta_\alpha\rangle}
$$
for some $\delta_\alpha> |\a|/3$.
\end{corollary}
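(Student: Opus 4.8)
The plan is to derive Corollary \ref{Cor5} from Theorem \ref{ThmMain1} by passing from a general Abelian group to a field and from the ``abstract bound'' formulation to the ``persistence module direct summand'' formulation. First I would note that the hypotheses of the corollary imply those of the theorem: compactness of $X$ gives local compactness, and the condition that none of the $\b_i$ is homotopic to $\a$ or $\a^-$ is strictly stronger than the theorem's requirement that none equals $\a$ as a subset of $X$ (since equality of subsets, together with $\a$ being a geodesic circle and a bottleneck loop of the same length, would force homotopy to $\a^{\pm}$). Thus Theorem \ref{ThmMain1} applies with $G=\FF$, yielding bounds $B_1>|\a|/3$ and $B_2>0$ and, for every admissible pair $(a(r),b(r))$, a non-trivial class $Q_r\in H_2(\sRips(X;a(r),2,b(r));\FF)$ compatible under the inclusion-induced maps, and non-existent below $|\a|/3$.

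Next I would specialize the functions to $a(r)=r$ and $b(r)=s\cdot r$ for a fixed small constant $s$. Choose $S>0$ small enough that for all $s<S$ and all $r$ with $|\a|/3<r<B_1$ we have $s\cdot r<B_2$; concretely $S=B_2/B_1$ works. Then for each such $s$, every $r\in(|\a|/3,B_1)$ is ``admissible'' in the sense of the theorem, so $Q_r$ exists, the bonding maps carry $Q_{r_1}\mapsto Q_{r_2}$, and there is no preimage of $Q_r$ at scales $q$ with $q\le|\a|/3$. The family $\{\FF\cdot Q_r\}_{r}$ is therefore a sub-persistence-module isomorphic to an elementary interval $\{\FF\}_{r\in(|\a|/3,\,\delta_\a\rangle}$, where $\delta_\a$ is the supremum of scales at which (the image of) $Q$ survives; by construction $\delta_\a\ge B_1>|\a|/3$, and q-tameness (via Proposition 5.1 of \cite{Cha2}) guarantees the module decomposes into elementary intervals so that this submodule splits off as a direct summand.

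The one genuinely delicate point is the splitting-off as a \emph{direct summand} rather than merely a submodule. For this I would invoke the structure theorem for q-tame (pointwise finite-dimensional after q-tameness reduction) persistence modules over a field: the module $\{H_2(\sRips(X;r,2,s\cdot r);\FF)\}_r$ decomposes as a direct sum of elementary intervals, and an interval module whose left endpoint is the open value $|\a|/3$ must appear as one of these summands because $Q_r$ is born exactly at $|\a|/3$ (property (2) of the theorem: nothing maps onto it from below) and lives at least up to $B_1$. Here the winding number homomorphism $\o_N\colon H_2\to\ZZ$ from Proposition \ref{PropMain1} does the real work: it certifies that $Q_r$ is nonzero and, more importantly, that no element born at or below $|\a|/3$ maps to it, which is exactly what pins the left endpoint of the interval summand. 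I would then set $\delta_\a=\delta_\alpha$ equal to the right endpoint of that summand (finite since $X$ is compact), note $\delta_\a>|\a|/3$, and the proof is complete.

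The main obstacle, as indicated, is bookkeeping the passage from ``a compatible family of nonzero classes with no predecessor'' to ``a direct summand isomorphic to an elementary interval with the specified left endpoint''; this is where one must be careful to cite the decomposition theorem in the q-tame setting and to argue that the birth scale is genuinely the open endpoint $|\a|/3$ rather than some $r_0>|\a|/3$. Everything else — checking hypotheses transfer from the corollary to the theorem, and choosing $S=B_2/B_1$ — is routine.
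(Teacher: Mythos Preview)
Your proposal is correct and matches the paper's intended argument: the paper states Corollary~\ref{Cor5} without an explicit proof, treating it as an immediate consequence of Theorem~\ref{ThmMain1} together with the interval decomposition of q-tame persistence modules (as flagged just before the corollary via the reference to \cite{Cha2}). Your choice $S=B_2/B_1$, the verification that the corollary's hypotheses imply those of the theorem, and the argument pinning the open left endpoint at $|\a|/3$ via property~(2) of the theorem are exactly the details one needs to supply, and you have supplied them correctly.
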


\begin{corollary}
 \label{Cor6}
 Let $X$ be a compact geodesic semi-locally simply connected space and let $\FF$ be a field. Assume $\a$ is a geodesic circle in $X$ satisfying the following properties:
\begin{enumerate}
 \item $\a$ is a bottleneck loop;
 \item $\a$ has arbitrarily small geodesically convex neighborhood.
\end{enumerate}
Then there exists $S>0$ such that for each $s<S$ the length of $\a$ is detected by the persistent homology $\{H_2(\sRips(X;r,2,s \cdot r);G) \}_{r>0}$ in the following way:
\begin{enumerate}
 \item if $\a$ is a member of a shortest homology base of $X$, then $|\a|/3$ is a closed endpoint of a one-dimensional bar;
 \item if $\a$ is not a member of a shortest homology base of $X$, then $|\a|/3$ is an open beginning of a two-dimensional bar.
\end{enumerate}
\end{corollary}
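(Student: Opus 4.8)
\textbf{Proof proposal for Corollary \ref{Cor6}.}

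The plan is to deduce both statements from Theorem \ref{ThmMain1} and Corollary \ref{Cor5} by a case analysis on whether $\a$ lies in a shortest homology base of $X$. First I would fix, via assumption (2) and Lemma \ref{LemmaD}, a compact geodesically convex neighborhood $N$ of $\a$ in which $\a$ is the shortest non-contractible loop; all quantities ($T$, $\nu$, $\mu_T$, $D$, $B_1$, $B_2$) are then produced exactly as in the proof of Theorem \ref{ThmMain1}, and $S$ is chosen small enough (depending on $B_1, B_2$) that for every $s<S$ there is an $r$ with $B_1 > r > |\a|/3$ and $B_2 > s\cdot r$; this is where the scaling $b(r) = s\cdot r$ gets absorbed into the hypotheses $a(r)=r$, $b(r)=s\cdot r$ of the theorem. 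The remaining work is to interpret the two cases homologically.

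For case (1), suppose $\a$ is a member of a shortest homology base of $X$, say with coefficients in $\FF$. Then $[\a]_\FF$ is a nontrivial class in $H_1(X;\FF)$, but on the other hand $\a$ is a loop of length $|\a|$, so an $(|\a|/3)$-sample $L_\a$ of $\a$ is a $1$-cycle in $\sRips(X;r,2,s\cdot r)$ for every $r>|\a|/3$ (it is a triangle of diameter $|\a|/3$, hence a simplex, and in fact needs only $n=0$ clusters so condition (2) of Definition \ref{DefSRips} is trivially satisfied). The class $[L_\a]$ is nontrivial in $H_1(\sRips(X;r,2,s\cdot r);\FF)$ for all such $r$ because the natural map to $H_1(X;\FF)$ (induced by the map of $\Rips$ to $X$ sending simplices to fillings, which is defined and well up to homotopy for simplices of diameter $<D$ by Lemma \ref{LemmaDa}) sends it to the nontrivial $[\a]_\FF$. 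Conversely, for $r<|\a|/3$ no $(|\a|/3)$-sample of $\a$ exists, and more to the point I would argue, using the shortest-base property together with Lemma \ref{LemmaDa} and the bottleneck property of $\a$, that $[\a]_\FF$ cannot be represented by any cycle supported on simplices of diameter $\le |\a|/3$ — any such cycle would be a combination of loops strictly shorter than $\a$, but in a shortest homology base no class represented by $\a$ is homologous to shorter loops. This exhibits $|\a|/3$ as a closed left endpoint of a $1$-dimensional bar. (I expect this to be the subtle step: showing that $|\a|/3$ is exactly the birth scale, i.e.\ that nothing strictly shorter carries the class, which uses the definition of "shortest homology base" of \cite{ZV} and an argument like Lemma \ref{LemmaDa} adapted from fillings of loops to fillings of $1$-cycles.)

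For case (2), suppose $\a$ is not a member of any shortest homology base of $X$. Then by definition $[\a]_\FF$ is homologous to an $\FF$-combination of shorter loops $\b_1,\dots,\b_k$, each of length at most $|\a|$ (one may take the loops of the shortest base that are $\le |\a|$ and appear with nonzero coefficient in a minimal-length representative of $[\a]_\FF$), and none of these $\b_i$ equals $\a$ as a subset of $X$ (they are strictly shorter, or at any rate distinct since $\a$ is not in the base). Thus hypotheses (1)–(3) of Theorem \ref{ThmMain1} — equivalently the stronger homotopy-level hypotheses of Corollary \ref{Cor5} — are met, so Corollary \ref{Cor5} applies and the persistent homology $\{H_2(\sRips(X;r,2,s\cdot r);\FF)\}_{r>0}$ contains an elementary interval $\{\FF\}_{r\in(|\a|/3,\d_\a\rangle}$ as a direct summand; in particular $|\a|/3$ is an open left endpoint of a $2$-dimensional bar. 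A minor point to check is that one may really take all $\b_i$ distinct from $\a$ as a \emph{subset}: if the minimal representative happened to reuse $\a$ itself this would contradict $\a$ not being in the base, since one could then either cancel $\a$ or exhibit it in a minimal-length generating set. Assembling the two cases with the common choice of $S$ completes the proof.
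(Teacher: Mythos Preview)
Your treatment of case (2) is fine and matches the paper: once $\a$ is not in a shortest homology base, it is by definition an $\FF$-combination of strictly shorter base loops, none of which coincide with $\a$, so Corollary \ref{Cor5} applies directly. The paper's proof of (2) is literally one clause: ``part (2) from Corollary \ref{Cor5}.''

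Case (1), however, is argued in the wrong direction. You conclude that ``$|\a|/3$ is a closed left endpoint of a $1$-dimensional bar'' and call it ``the birth scale,'' but the phenomenon is the opposite: the bar associated to $\a$ is \emph{born} at arbitrarily small scales (any $r$-sample of $\a$ is a $1$-cycle in $\sRips(X;r,2,s\cdot r)$ for every $r>0$) and \emph{dies} at $r=|\a|/3$, because at that scale the three equidistant points on $\a$ span a $2$-simplex whose boundary, together with the thin outer triangles of Figure \ref{FigNullhomotopy}, bounds $L_\a$. Your own parenthetical ``it is a triangle of diameter $|\a|/3$, hence a simplex'' actually witnesses the \emph{triviality} of $[L_\a]$ for $r>|\a|/3$, not its nontriviality. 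Relatedly, the ``natural map to $H_1(X;\FF)$ induced by sending simplices to fillings'' cannot exist as a chain map at scales $r>|\a|/3$: the central $2$-simplex would have to be sent to a singular $2$-chain with boundary $\a$, but $[\a]_\FF\neq 0$. What needs to be shown is that the class carried by $r$-samples of $\a$ is nontrivial for all $r\le |\a|/3$ and becomes trivial precisely at $|\a|/3$; the paper does not reprove this but simply cites \cite{ZV} (``Part (1) follows from \cite{ZV}''), where this death-time characterization of members of a lexicographically shortest base is established. So for (1) you should either invoke \cite{ZV} directly or reverse your argument to prove nontriviality for $r\le |\a|/3$ (e.g.\ via the winding-number homomorphism, which does descend to $H_1$ at those scales) and triviality just beyond.
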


\begin{proof}
 Part (1) follows from \cite{ZV}, part (2) from Corollary \ref{Cor5}.
\end{proof}

\subsection{Localizing geodesic circles}

Throughout this subsection we assume the setting of Corollary  \ref{Cor6}.

While Corollary  \ref{Cor6} is clear about detecting the lengths of $\a$ in terms of birth or death times of persistence, localizing (determining the location of) a geodesic circle $\a$ in question is a bit more complicated. 
\begin{enumerate}
 \item Assume loop $\a$ is not homologous to a non-trivial $\FF$-combination of loops  $\beta_1, \beta_2, \ldots, \beta_k$ of length at most $|a|$, none of which is homotopic to $\a$ or $\a^-$. In this case $\a$ is a member of each lexicographically minimal homology base (see \cite{ZV} for details) by definition. By the results of \cite{ZV} there is a unique correpsponding $1$-dimensional persistent homology class with a closed death-time $|\a|/3$. This class is generated by any $(|\a|/3)$-sample of $\a$ and any of its nullhomologies in $\Rips(X;r)$ (and hence $\sRips(X;r,2,s \cdot r)$) for $r$ slightly larger than $|\a|/3$ contains a $2$-simplex of diameter at least $|\a|/3$ whose vertices form a circumventing triple in a small neighborhood of $\a$ (see the Localization Theorem in \cite{ZV}). All other $2$-simplices in such a nullhomology can be chosen so that the length of a filling of its vertices is less than, say,  $3 |\a|/4$, although at least the ones sharing an edge with the above-mentioned simplex will be of diameter about $|\a|/3$. In particular, this means that $\a$ can be well approximated by a filling of a critical $2$-simplex that ensures the triviality of the mentioned homology class. 
 
 In the case of closed Rips complexes $\overline\Rips(X;r)$, the $1$-dimensional persistent homology class corresponding to $\a$ will first become trivial at $r=|\a|/3$, where each of its nullhomologies will contain an equilateral $2$-simplex of diameter $|\a|/3$, whose corresponding filling of vertices is $\a$. In this setting $\a$ can be reconstructed explicitly. 
 
 \item Assume  $\a$ is homologous to a non-trivial $\FF$-combination of loops  $\beta_1, \beta_2, \ldots, \beta_k$ of length less than $|\a|$. By the construction in Theorem \ref{ThmMain1} all simplices of the corresponding $2$-dimensional homology class $Q_r$ can be chosen to have diameter at most $\max_i |\beta_i|/3$, except for the ones forming the nullhomotopy of $L_\a$, i.e., the ones appearing on Figure \ref{FigNullhomotopy}. Of the later ones, only one  triangle (central on Figure \ref{FigNullhomotopy}) is circumventing and a filling of its vertices is a good approximation of $\a$, while the other triangles can be chosen so that their fillings are of length no more than $3|\a|/4$. By the invariance of the winding number, each representative of $Q_r$ contains a (with respect to $\a$) circumventing $2$-simplex, whose filling of its vertices is a good approximation of $\a$.
 
 Again, if we considered closed selective Rips complexes as an obvious analogue to the selective Rips complexes introduced in this paper, the filling of the vertices of this circumventing $2$-simplex would be $\a$.
 
  \item Assume  $\a$ is homologous to a non-trivial $\FF$-combination of loops  $\beta_1, \beta_2, \ldots, \beta_k$ of length at most $|a|$, none of which is equal to $\a$ and $n$-many of which are of length $|\a|$ for some $n>0$.  Let us call these $n$-loops $\g_1, \g_2, \ldots, \g_n$. For the sake of simplicity let us suppose that this assumption does not hold for $n-1$. As each lexicographically minimal homology base consists of geodesic circles by the results of \cite{ZV}, we may assume all $\beta_i$ are geodesic circles. In this case, contrary to (2), the construction of Theorem \ref{ThmMain1} results in  $n+1$ ``central'' triangles: their fillings are approximations of $\a$ and $\gamma_i$. 
  
  In this case $\a$ is also a member of some lexicographically minimal bases and the same reconstruction technique as in (1) may also apply for certain nullhomologies of $\a$. In fact, at least $n$-many $1$-dimensional bars cease to exist at $|\a|/3$, with the reconstruction procedure of (1) yielding (depending on the chosen nullhomologies) at least $n$ of the $\a, \gamma_1, \ldots, \gamma_n$. 
\end{enumerate}

To summarize the discussion, the location of $\a$ may be determined or approximated by a filling of the vertices of the appropriate $2$-simplex corresponding to $\a$ in terms of Corollary  \ref{Cor6}. When using selective Rips filtration or when approximating the persistence of $X$ by computing persistence on a generic finite sample, arbitrarily precise approximations of $\a$ are obtained. When using the closed version of selective Rips filtration, loop $\a$ may be reconstructed directly. 

An issue that arises in the context is the choice of appropriate representative of homology (cases (1) and (3)) or nullhomology (cases (2) and (3)), (i.e., the one described by \cite{ZV} and Theorem \ref{ThmMain1}), from which we may select an appropriate ``critical'' simplex. Our results imply that these representatives may be obtained in the following way:
\begin{description}
 \item[i] try replacing each simplex of a representative by a chain consisting of simplices of diameter less than $|\a|/3$;
 \item[ii] try replacing the remaining simplices of diameter more that $|\a|/3$ by  a chain consisting of simplices, whose filling of vertices is of length less than $3|\a|/4$;
 \item[iii] the only remaining simplices, for which such a reduction may not be obtained, are the ones, whose fillings localize loops $\a$ and $\gamma_i$ as described in (1)-(3). Each such simplex can be replaced by  a chain consisting of simplices, whose filling of vertices is of length less than $|\a|$ and a single simplex, whose filling is of length $|\a|$ and equals $\a$ (or possibly $\gamma_i$ in the case of (3)). 
\end{description}

In the computational setting one would compute persistent homology of a sufficiently dense finite subset $S \subset X$ equipped with the subspace metric induced from the geodesic metric on $X$. In this case an algorithmic procedure to extract or localize an approximation of $\a$ is of interest. Steps \textbf{i.}-\textbf{iii.} translate to the following procedure on the filtration by selective Rips complexes:
\begin{description}
 \item[a] Sort simplices by diameter (this is essentially what filtrations by Rips complexes and selective Rips complexes do).
 \item[b] At each $r>0$, sort simplices of diameter $r$ according to the length of a filling of its vertices, and use this order when constructing the boundary matrix.
 \item [c] Compute persistent homology using the standard algorithm on the obtained order. 
\end{description}

During this computation, each $2$-simplex that appears as a critical simplex in the sense of (1)-(3) above (i.e., in the sense that it either kills a long $1$-dimensional bar or gives birth to an appropriate $2$-dimensional bar, under the conditions that $S$ is sufficiently dense) will be a simplex, which approximates $\a$ and potentially $\gamma_i$ in the sense of  (1)-(3) above.

\subsection{Example} 
\label{SubSectExample}
We now return to the example mentioned in the introduction. Consider a sphere, cut it along the parallel $P$ at height $h$ above equator and keep the lower part (Figure \ref{FigEnd}). We  denote the obtained lower part by $Z$. Equip $Z$ with the geodesic metric. Parallel $P$ is not a geodesic circle in the entire sphere but becomes one in $Z$. Also, $P$ is a bottleneck loop in $Z$ and has an arbitrarily small geodesically convex neighborhood, meaning it can always be detected with selective Rips complexes. However, if $P$ is close enough to the equator (i.e., so close that $|P|/3$ is larger than the distance from $P$ to the south pole) it can not be detected by Rips complexes: in that case $\Rips(Z, |P|/3)$ is a cone with apex in the south pole.

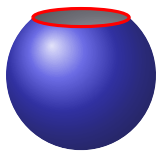
\begin{figure}[htbp]
\begin{center}
\begin{tikzpicture}[scale=.4]
    \begin{scope}
        \clip (130: 2.5) arc (180:-180: 1.6 and .3);
                \shade[ball color=blue!30!gray!20,shading angle=180] (0,0) circle (2.5);
    \end{scope}
    
    \begin{scope}
    \clip (130: 2.5) arc (-180:0: 1.6 and .3)
    arc (50:-230:2.5);
        \shade[ball color=blue!70!gray,opacity=0.90] (0,0) circle (2.5);
    \end{scope}
    to[out=200,in=-30] (130:2.5);
    \draw[red,very  thick] (130: 2.5) arc (180:-180: 1.6 and .3);

\end{tikzpicture}
\caption{A cut off sphere $Z$.}
\label{FigEnd}
\end{center}
\end{figure}

The computational examples below were produced using  Ripserer.jl software \cite{MC}.
 Space $Z$ was approximated by a random finite subset of 10 000 points. For computational convenience a density filter was applied that ensured that each pair of points is at distance at least $0.1$. Upon the remaining points a weighted graph was induced: each pair of points at distance at most $0.2$ was connected by an edge with weight being the Euclidean distance between points. The metric on the obtained collection of points is the geodesic metric induced by the described weighted graph and represents an approximation of $Z$ in a geodesic metric.

\begin{figure}[htbp]
\begin{center}
\includegraphics[scale=.4]{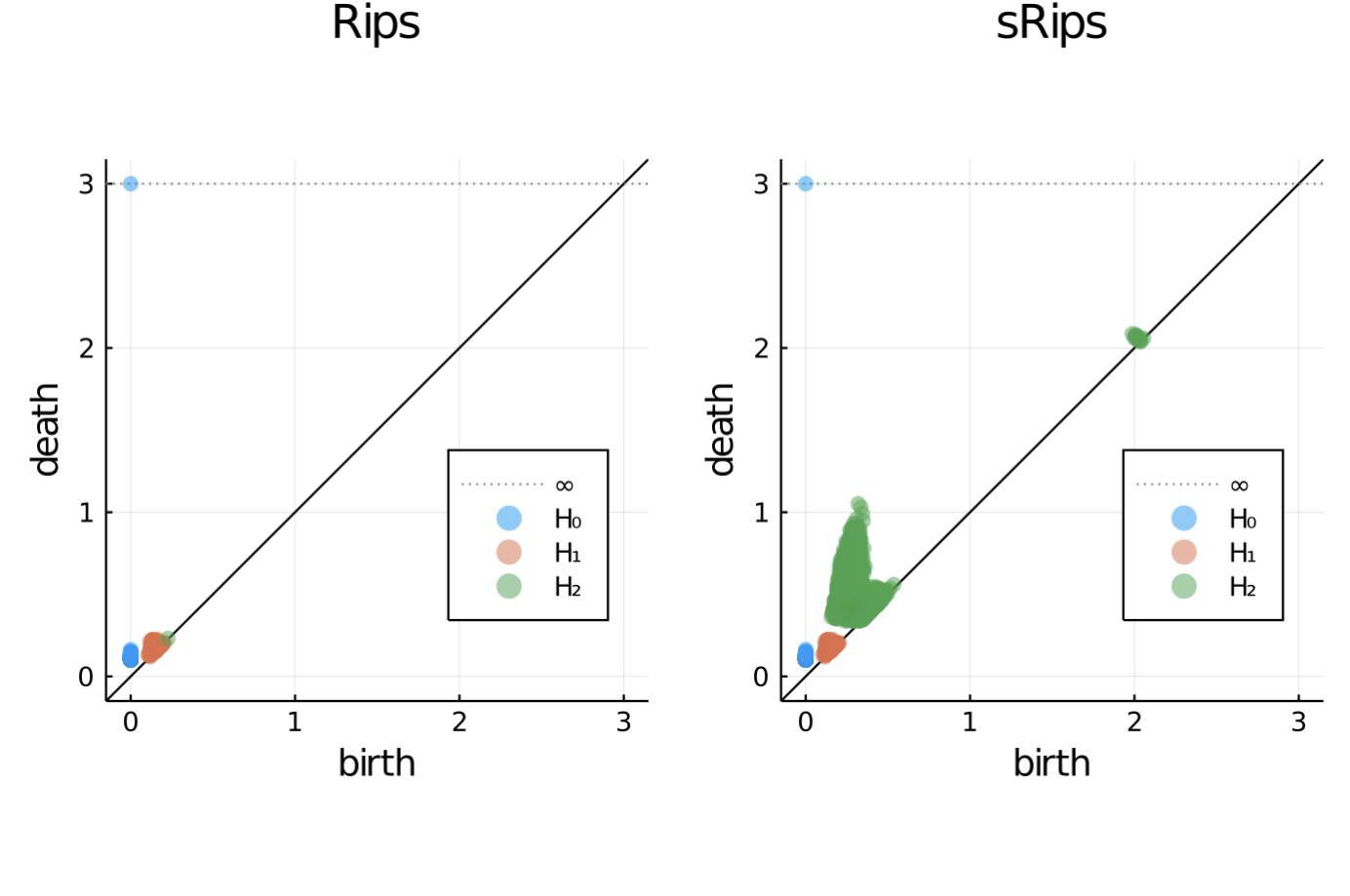}
\caption{Persistence diagrams of $Z$ using cutoff at $h=0.35$. The right diagram corresponds to $\sRips(Z; r,2, 0.3r)$. The circumference of $P$ equals approximately $2  \cdot \pi \cdot 0.94$ and about one third of it corresponds to the birth of $2$-dimensional points on the right. The diagram on the left using Rips standard complexes does not detect $P$. The lifespan of the last $2$-dimensional point appears to be short. However, it is stable with respect to various samples in the sense that it appears consistently.  Furthermore, its length $0.085$ for this specific sample is comparable to the scale of the difference of the radius of the equator and $P$, which is about $0.06$. A large number of $2$-dimensional points appearing up to about $0.6$ are a result of discretization and have a significant negative impact on the computational speed. See Figure \ref{FigEnd2} for a workaround. }
\label{FigEnd1}
\end{center}
\end{figure}

\begin{figure}[htbp]
\begin{center}
\includegraphics[scale=.4]{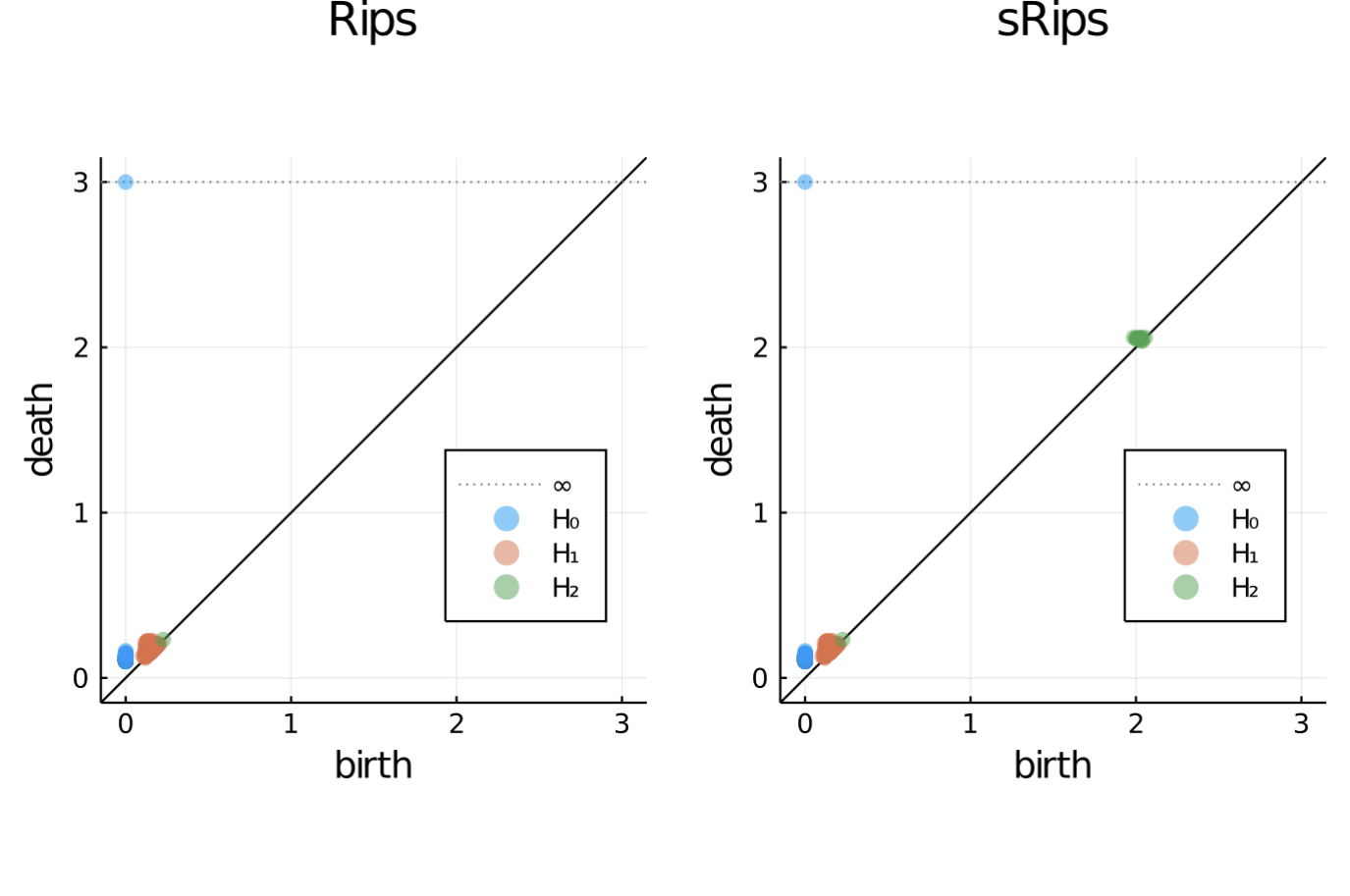}
\caption{Persistence diagrams of $Z$ using cutoff at $h=0.35$. The right diagram corresponds to $\sRips(Z; r,2, \min\{r, 0.7 + 0.3r\})$, meaning that at small scales we are in the region of classical Rips complexes, while for larger scales the effect of selective Rips complexes comes to the fore. With this modification we still detect $P$, reduce the initial noise and consequently speed up the process.}
\label{FigEnd2}
\end{center}
\end{figure}

\begin{figure}[htbp]
\begin{center}
\includegraphics[scale=.4]{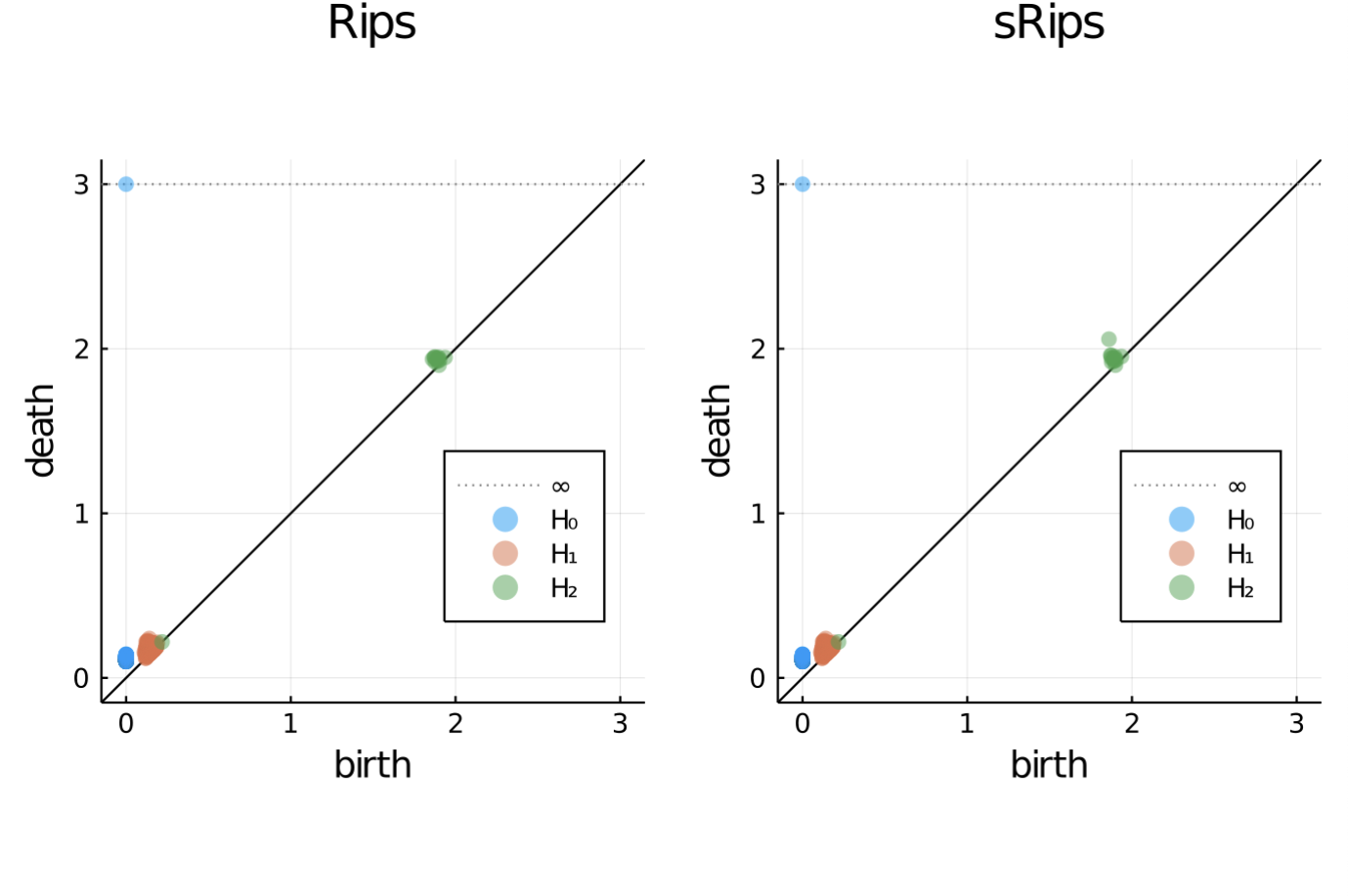}
\includegraphics[scale=.4]{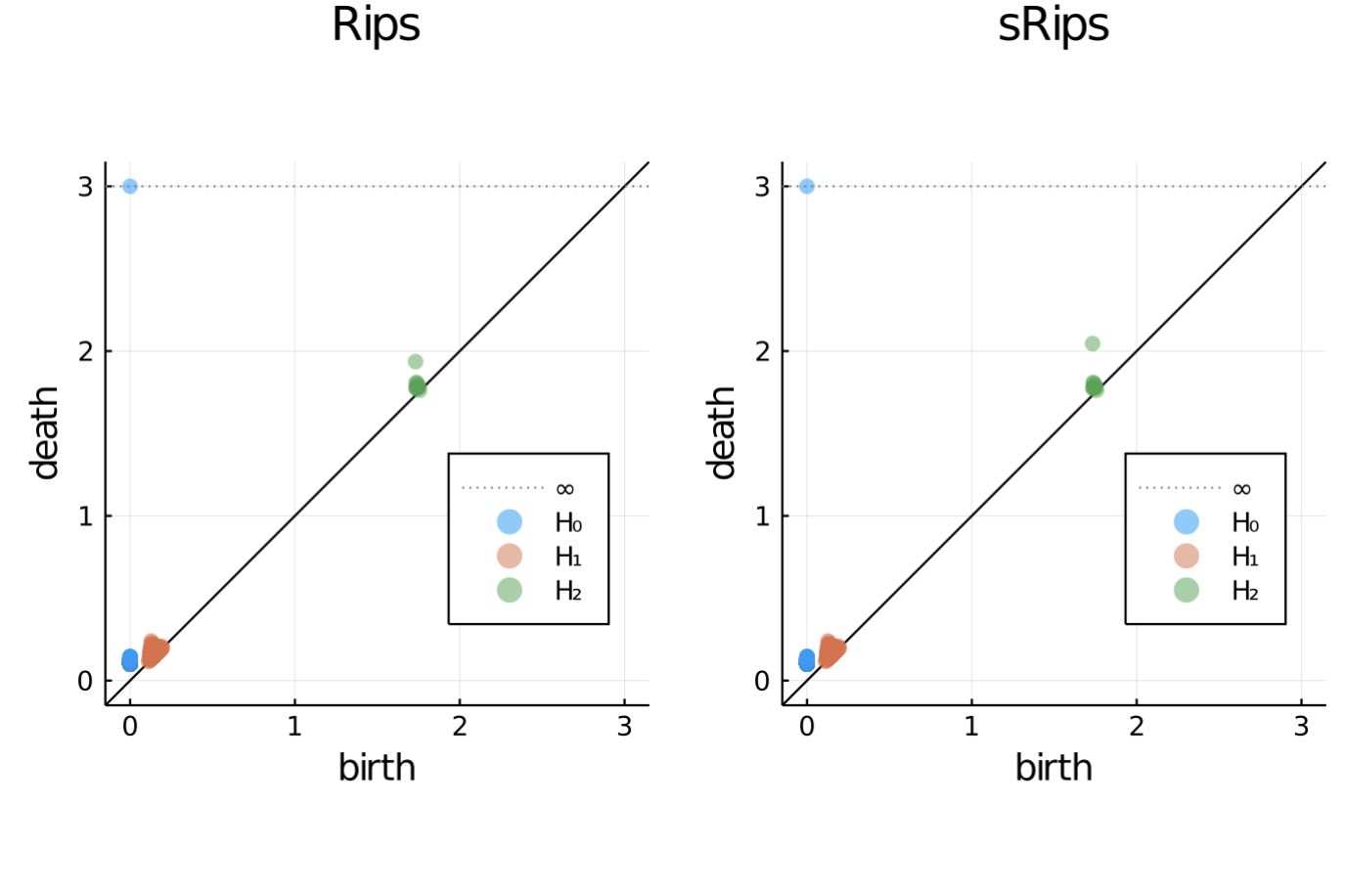}
\caption{Persistence diagrams of $Z$ using cutoff at $h=0.5$ (diagrams above) and $h=0.6$ (diagrams below). The right hand diagrams corresponds to $\sRips(Z; r,2, \min\{r, 0.9 + 0.1\cdot r\})$. Both Rips and selective Rips filtrations detect $P$ at this scale with the later inducing a longer-living $2$-dimensional point. In fact, the lifespan of the $2$-dimensional point in question can be made arbitrarily long with appropriate filtration functions.}
\label{FigEnd3}
\end{center}
\end{figure}

\newpage

\end{document}